\theoremstyle{plain}
\newtheorem{theorem}{Theorem}[section] 
\newtheorem{lemma}[theorem]{Lemma}
\newtheorem{proposition}[theorem]{Proposition}
\theoremstyle{definition}
\newtheorem{remark}[theorem]{Remark}
\theoremstyle{remark}
\mathchardef\emptyset="001F
\numberwithin{equation}{section}
\newcommand{\R}{\mathbb{R}}
\newcommand{\Rd}{\mathbb{R}^3}
\newcommand{\Rdd}{\mathbb{R}^{3\times 3}}
\newcommand{\dt}{\textnormal{d}t}
\newcommand{\dx}{\textnormal{d}x}
\newcommand{\dive}{\textnormal{div}}
\newcommand\ee{\end{equation}}
\newcommand\be{\begin{equation}}
\title[Time dependent nonlinear rod
model]
{Dimension Reduction for Time-Dependent von Kármán Rods}
\author[Federico Cianci]{Federico Cianci}
\address[Federico Cianci]{Universitätsstraße 14, 86159 Augsburg}
\email[Federico Cianci]{federico.cianci@math.uni-augsburg.de}
\author[Bernd Schmidt]{Bernd Schmidt}
\address[Bernd Schmidt]{Universitätsstraße 14, 86159 Augsburg
}
\email[Bernd Schmidt]{bernd.schmidt@math.uni-augsburg.de}
\begin{document}

\begin{abstract}

This paper aims to study the convergence of solutions in three-dimensional nonlinear elastodynamics for a thin rod as its cross section shrinks to zero for displacements that are comparable to the small radius of the rod. Assuming the existence of solutions and proper control of the torsional velocity, we show how these converge to the solutions of an effective dimensionally reduced model which is a version of the the time-dependent von Kármán equations for a one-dimensional rod. In the presence of high-frequency torsional vibrations, energy can dissipate in the limit and we obtain additional contributions in the limiting equations.

\end{abstract}

\maketitle



{\small{{\it 2020 MSC:\/}
74K10
, 74B20
, 74H10
, 35L70
.
}}

{\small{{\it Key words:\/}
Rods, dimension reduction, nonlinear elastodynamics, von Kármán
}}

\section{Introduction}\label{intro}

In this paper we study the effective dynamics of thin elastic rods in a `von Kármán regime' of moderate displacements whose size is comparable to the small aspect ratio $h$ of the rod and whose elastic energy is correspondingly of order $h^6$. We thus extend previous studies of such systems from a static to a time-dependent setting. 

In this regime, a rigorous variational dimension reduction analysis by means of $\Gamma$-convergence has been achieved in \cite{MM-Gamma-VK}. Also the convergence of static equilibria has been studied in \cite{BukalPawelczykVelcic:17} (even with simultaneous homogenization), in \cite{Ameismeier}, and previously in \cite{DavoliScardia:12} (with respect to an alternative first-order stationarity condition). To put these results into context, we also refer to \cite{AcerbiButtazzoPercivale} for a $\Gamma$-convergence analysis of strings with energy of order $h^2$ and to \cite{MoraMueller:03} for a derivation of the nonlinear Kirchhoff theory for rods with energy of order $h^4$. The latter was also derived from atomistic models in \cite{SchmidtZeman:23}. Static equilibria of nonlinear Kirchhoff rods have been investigated in \cite{M-M-S,MM-crit}. For a general survey on rod theories and their history we refer to \cite{Antman:95}. Our analysis is also strongly influenced by the corresponding von Kármán theory for plates, cp.\ \cite{FriesekeJamesMuller:06,Muller-Pakzad,A-M-M,A-M-M-timeex,QinYao:21}. Moreover, we highlight \cite{Abels-Ames-timeex} and \cite{Abels-Ames-timeconv}, where the dynamic rod problem is analyzed under a lower energy scaling, resulting in a fully linear limit model.

The dimensionally reduced theory in the present case is described in terms of a set of limiting variables $u,v_2,v_3,w$ that only depend on time and the midline of the rod. In the reference configuration a thin rod of thickness $h$ occupies $\Omega_h = (0,L) \times h S$ with $S \subset \R^2$ and $h \ll 1$. Here $h^2 u, h(v_2,v_3)$, and $h^2w$ approximate, respectively, small displacements along the fiber, orthogonal to the fiber, and a torsional displacement. The elastic energy of order $h^6$ is compatible with transversal forces of order $h^3$. In order to capture the effect of such small forces, we pass to a long time horizon and consider deformations $\xi^h : (0, h^{-1}T) \times \Omega_h \to \R^3$. The dynamics of $\xi^h$ is given by Newton's second law and takes the form 
\begin{equation*}
    \partial^2_{\tau \tau} \xi^h(\tau, z) - \dive (DW(D\xi^h(\tau, z)))=h^3 g_h(\tau, z) \quad \text{in } (0,h^{-1}T) \times \Omega_h
\end{equation*}
with a non-linear stored energy function $W$ and a transversal, slowly varying force $h^3 g_h$. Our main aim is to provide a characterization of the limiting dynamics as $h \to 0$. 

To this end, we show that appropriately rescaled components of solutions $\xi^h$ converge to $u$, $v_2$, $v_3$, and $w$, respectively, and that the limit $(u,v_2,v_3,w)$ solves a time-dependent system of partial differential equations for a one-dimensional rod in the ambient space $\R^3$. For a physical interpretation of our results it is useful to note that, to leading order in $h$, the rod displacement is given by the orthogonal components $h(v_2,v_3) + O(h^2)$. A force and hence, by Newton's law, acceleration of order $h^3$ on a time interval of size $h^{-1}$ leads to displacements of the order $h$. Our limiting system will thus comprise two wave-type equations for $v=(v_2,v_3)$ and two more static equations that couple $u$ and $w$ to $v_2$ and $v_3$. 

As described above, a purely static version of the limiting equations has been derived in \cite{MM-Gamma-VK,BukalPawelczykVelcic:17,Ameismeier}. Interestingly, we observe a new feature which has no counterpart in the static theories. To explain this, we first remark that in the absence of high frequency torsional vibrations, i.e., when the angular velocity of the fiber is comparable to the orthogonal velocities $\dot v_2,\dot v_3$, then we do indeed obtain precisely the dynamic von Kármán version of \cite{MM-Gamma-VK,BukalPawelczykVelcic:17,Ameismeier}, see Theorem~\ref{MAINTHEOREM} and Remark~\ref{rem:slow-torsion}. Yet, in general the torsional component might become highly oscillatory. Indeed, if e.g.\ energy of order $h^6$ is stored into a rod of circular cross section $S$ by initial values corresponding to a pure twist of order $h^2$, then by symmetry the vertical displacements vanish and the torsional component can become highly oscillatory. (Recall that, as argued above, forces of order $h^3$ will lead to total displacements of order $h \gg h^2$.) As a consequence, energy can dissipate when passing to the dimensionally reduced system in general. Mathematically this corresponds to the fact that an energy bound of order $h^6$ is compatible with only weak convergence of the torsional component to $w$. We still obtain a limiting system of partial differential equations for $(u,v_2,v_3,w)$, which now comprises additional terms that measure the loss of torsional momentum as $h \to 0$, cf.\ Theorem~\ref{MAINTHEOREM}.

\subsection*{Notations}

We use standard notations for function spaces and matrices. We write $A : B := \sum_{i,j} A_{ij} B_{ij}$ for the Frobenius inner product on $\mathbb{R}^{m \times d}$. For $v\in\mathbb{R}^d, w \in \mathbb{R}^m$ we write $v \otimes w$ for the matrix in $\mathbb{R}^{d \times m}$ with entries $(v \otimes w)_{ij} = v_i w_j$. The identity matrix on $\mathbb{R}^{d \times d}$ is denoted by ${Id}$. The $d$-dimensional Lebesgue measure is denoted by $\mathcal{L}^d$. For a tensor field $G: \mathbb{R}^d \to \mathbb{R}^{m \times d}$, its divergence is defined row-wise by $(\operatorname{div} G)_i := \sum_{j=1}^d \partial_{x_j} G_{ij}$ for $i = 1, \ldots, m$. We adopt conventional notations for Sobolev and Bochner spaces. Moreover, given an interval $ (0,T) $ and an open set $\Omega \subset \mathbb{R}^d$, we identify the time derivative in the Bochner sense on $(0,T)$, for maps with values in a function space over $\Omega$, with the weak (distributional) time derivative on $(0,T) \times \Omega$.

\section{Formulation of the problem}
Let $L>0$ and let $S \subset \R^2$ be a bounded domain with a Lipschitz boundary. We define $\Omega:= (0,L) \times S\subset \Rd$ and the reference configuration $\Omega_h= (0,L) \times h S$, where $h>0$. Moreover, we assume
\begin{equation}\label{eq:centroids}
    \int_{S} x_2 \,\dx_2\dx_3=  \int_{S} x_3\, \dx_2\dx_3=  \int_{S} x_2x_3 \,\dx_2\dx_3=0 \,\, \textnormal{ and }\,\,\mathcal{L}^2(S)=1.
\end{equation}
We assume that the elastic behavior of the material is described by a stored energy function $W \colon \Rdd \to [0, + \infty)$ that satisfies the following standard assumptions.
\begin{equation}\label{Wframeindiff}
    W(RF)=W(F) \quad \textnormal{for all } R \in \operatorname{SO}(3) \textnormal{ and } F \in \Rdd,
\end{equation}
\begin{equation}
    W(R)=0 \textnormal{ for all } R \in \operatorname{SO}(3),
\end{equation}
\begin{equation}
        W(F) \geq C \operatorname{dist}^2(F,\operatorname{SO}(3)),
\end{equation}
\begin{equation}\label{Wregularity}
    W \in C^0(\Rdd) \textnormal{ and } W \in C^2(U_{SO}),
\end{equation}
where the constant $C>0$ does not depend on $F$, and $U_{SO}$ is a suitable neighborhood of $\operatorname{SO}(3)$. Moreover, we assume that $W$ is differentiable in $\Rdd$ and that, for some constant $C>0$,
\begin{equation}\label{Wlineargrowth}
    |DW(F)|\leq C(|F|+1) \quad \text{for all } F \in \Rdd.
\end{equation}
Combined with the regularity assumption \eqref{Wregularity}, the condition $DW({Id}) = 0$, and a Taylor expansion around the identity, this implies that
\begin{equation}\label{controllodifferenzialeDW}
    |DW({Id} + A)| \leq C|A| \quad \text{for all } A \in \mathbb{R}^{3 \times 3}.
\end{equation}
The assumptions \eqref{Wframeindiff}--\eqref{Wregularity} are classical in dimension reduction and commonly used in continuum mechanics. Although no further conditions on the energy density $W$ are needed when studying the convergence of minimizers, in the context of dimension reduction for critical points and dynamical problems---as considered here---we additionally assume \eqref{Wlineargrowth} for technical reasons (cp.\ ~\cite{A-M-M, MM-crit, M-M-S, Muller-Pakzad, BukalPawelczykVelcic:17,Ameismeier} and, for further discussion, \cite{Ball}). 

Let $T>0$. The non-linear elasticity problem in $(0,h^{-1}T)\times \Omega_h$ amounts to finding a deformation $\xi^h\colon (0,h^{-1}T) \times \Omega_h \to \Rd$ satisfying
\begin{equation}
    \xi^h \in L^2(0,h^{-1}T;H^1(\Omega_h)) \cap H^1(0,h^{-1}T;L^2(\Omega_h)) \cap H^2(0,h^{-1}T;H^{-1}(\Omega_h))
\end{equation}
that is a weak solution of
\begin{equation}\label{eqOndeReference}
    \partial^2_{\tau \tau} \xi^h(\tau, z) - \dive (DW(D\xi^h(\tau, z)))=h^3 g^{h}(\tau, z) \quad \text{in } (0,h^{-1}T) \times \Omega_h,
    \end{equation}
    \begin{equation}\label{eq:Dir-cond-zeta}
    \xi^h(\tau,z)= z, \quad (\tau, z) \textnormal{ in } (0,h^{-1}T) \times \{0,L\}  \times hS,
    \end{equation}
    \begin{equation}\label{initCondReference}
    \xi^h(\tau=0,z)= \xi^h_P(z), \quad \partial_t\xi^h(\tau=0, z)= \xi^h_S(z), \quad z \in \Omega_h,
    \end{equation}
with natural Neumann boundary conditions on $(0,L)\times \partial (hS)$ (cp.\ \cite{A-M-M} for the analogous case of thin plates). 
The forcing term satisfies $g^h\in L^2((0,+\infty); L^2((0,L);\R^3))$ and $g^h_1=0$, where $g^h_1$ is the first component of $g^h$. The initial conditions $\xi^h_P\in H^1(\Omega_h,\R^3)$ and $\xi^h_S \in L^2(\Omega_h,\R^3)$ satisfy the following energy inequality:
\begin{equation}\label{boundebergiaXI0}
    \frac{1}{2} \int_{ \Omega_h} |\xi^h_S(z_1,{z'})|^2 \, \text{d}z + \int_{\Omega_h} W(D\xi^h_P(z_1,{ z'})) \,\text{d}z \leq C h^{ 6}.
\end{equation}

In dimension reduction problems, it is convenient to recast the problem defined above in a domain independent of $h$. Adapting \cite{A-M-M}, we set $(t, x_1, x')=(h\tau, z_1, h^{-1} z')$ and define $y^h\colon [0,T] \times \Omega \to \Rd$ as
\begin{equation}\label{yhdefinition}
    y^h(t,x):=\xi^h(t/h, x_1, h x'), \quad \text{in } [0,T]\times \Omega,
\end{equation}
where here and in the sequel we write $x = (x_1,x')$ for $x \in \Rd$. Given $\psi \in H^1(\Omega; \Rd)$, we define the scaled gradient as
\begin{equation*}
    D_h \psi:= \left( \partial_{x_1} \psi \,\left|\, \frac{1}{h}\partial_{x_2} \psi \,\right|\, \frac{1}{h}\partial_{x_3} \psi  \right),
\end{equation*}
and given $\Psi \in H^1(\Omega; \Rdd)$, we define the scaled divergence as
\begin{equation*}
    \dive_h \Psi \cdot e_i := \partial_{x_1} \Psi_{i1} + \tfrac{1}{h} \partial_{x_2} \Psi_{i2} + \tfrac{1}{h} \partial_{x_3} \Psi_{i3},\,\,\, \text{for } i=1,\,2,\,3.
\end{equation*}
We define the scaled problem of non-linear elasticity in $(0,T) \times \Omega $ as the problem of finding a deformation $y^h\colon (0,T) \times \Omega \to \Rd$ such that
\begin{equation}\label{defyhspazifunz}
    y^h \in L^2((0,T); H^1(\Omega;\R^3)) \cap H^1((0,T); L^2(\Omega;\R^3)) \cap H^2((0,T); H^{-1}(\Omega;\R^3))
\end{equation}
and that is a weak solution of the system
\begin{equation}\label{eqyhforte}
    h^2 \partial^2_{tt} y^h(t,x) - \dive_h (DW(D_hy^h(t,x)))=h^3 f(t,x_1) \quad \text{in } (0,T) \times \Omega,
\end{equation}
\begin{equation}\label{eq:Dir-cond}
    y^h(t,x)= (x_1, hx') \quad \text{in } (0,T) \times \{0,L\} \times S,
\end{equation}
\begin{equation}\label{eq:iniz-cond-yh}
    y^h(0,x)= y^h_P(x):=\xi^h_0(x_1, h x_2, h x_3) \quad \text{in } \Omega,
\end{equation}
\begin{equation}\label{condizinizyhforte}
    \partial_t y^h(0,x)= y^h_S(x):= \frac{1}{h}\xi^h_1(x_1, h x_2, h x_3) \quad \text{in } \Omega,
\end{equation}
and we consider natural Neumann boundary conditions on $(0,L)\times \partial S$. We assume that the forcing term satisfies 
\begin{equation}\label{conditionsonF}
    f(t,x_1):=g^{ h}(t/h, x_1), \text{ }  f_1=0, \text{ and } f \in L^2((0,+\infty); L^2((0,L);\R^3)),
\end{equation}
while the initial conditions satisfy
\begin{equation}
    y^h_P \in H^1(\Omega; \Rd), \textnormal{ } y^h_S \in L^2(\Omega; \Rd)
\end{equation}
and the energy bound
\begin{equation}\label{eqenergiadatiinizialiyh}
    \frac{h^2}{2} \int_\Omega |y^h_S(x)|^2 \,\text{d}x + \int_\Omega W(D_h y^h_P(x)) \,\text{d}x \leq C h^4.
\end{equation}

\begin{remark}
    For the reader's convenience, we recall the definition of a weak solution of \eqref{defyhspazifunz}-\eqref{condizinizyhforte}. A function $y^h \in L^2((0,T); H^1(\Omega;\R^3)) \cap H^1((0,T); L^2(\Omega;\R^3)) \cap H^2((0,T); H^{-1}(\Omega;\R^3))$ is said to be a weak solution to the \eqref{defyhspazifunz}-\eqref{condizinizyhforte} problem if \eqref{defyhspazifunz} is satisfied in the following sense:
    \begin{equation*}
    h^2\int^T_0 \!\!\!\int_\Omega\! \partial_t y^h \cdot \partial_t \psi \,\dx\dt - \int^T_0\!\!\! \int_\Omega\! DW(D_hy^h) : D_h\psi \,\dx\dt + h^3 \int^T_0\!\!\! \int_\Omega\! f \cdot \psi \,\dx\dt=0
\end{equation*}
for every $\psi \in H^1_0(0,T; L^2(\Omega;\Rd))\cap L^2(0,T; H^1(\Omega;\Rd))$ such that $\psi(t)=0$ in $(\{0\}\times S) \cup (\{L\} \times S)$ for a.e. $t\in(0,T)$. Moreover, the boundary conditions \eqref{eq:Dir-cond} are satisfied in the sense of traces. We recall that the space of weakly continuous functions on the interval $[0,T]$ with values in a Banach space $X$, denoted by $C^0([0,T]; X_{\mathrm{weak}})$, is defined as
\[
C^0([0,T]; X_{\mathrm{weak}}) := \left\{ v \colon [0,T] \to X \,\middle|\, t \mapsto \langle \phi, v(t) \rangle \text{ is continuous for all } \phi \in X^* \right\},
\]
where $X^*$ denotes the topological dual of $X$. We will see in the next section that $y^h \in L^\infty(0,T; H^1(\Omega; \Rd))$ and $\partial_t y^h \in L^\infty(0,T; L^2(\Omega; \Rd))$ so, in view of Lemma \ref{LemmaWeakCont}, the initial conditions \eqref{eq:iniz-cond-yh}-\eqref{condizinizyhforte} are satisfied if
\begin{equation*}
    y^h(t) \rightharpoonup y^h_P \textnormal{ in } H^1(\Omega; \Rd) 
    \quad\textnormal{and}\quad 
    \partial_t y^h(t) \rightharpoonup y^h_S \textnormal{ in } L^2(\Omega), \quad \textnormal{as } t\to 0^+.
\end{equation*}
\end{remark}

We assume that there exists $h_0 >0$ such that for every $h<h_0$ there is a weak solution $y^h$ of \eqref{defyhspazifunz}-\eqref{condizinizyhforte} and that $y^h$ satisfies the following energy inequality:
\begin{align}\label{eqenergiayh}
    &\frac{h^2}{2} \int_\Omega |\partial_t y^h(t,x)|^2 \,\text{d}x + \int_\Omega W(D_h y^h(t,x)) \,\text{d}x \leq \frac{1}{2} \int_\Omega |\xi^h_S(x_1,hx')|^2 \text{d}x \nonumber\\
    &+ \int_\Omega W(D\xi^h_P(x_1,hx')) \,\text{d}x + \int^t_0 \int_\Omega h^3 f(s,x_1) \cdot \partial_t y^h(s,x)  \,\text{d}x \text{d}s,
\end{align}
for a.e. $t \in (0,T)$.

As discussed above, our limiting system will crucially depend on the asymptotic behavior of the torsional velocity of the rod. In order to investigate this, we introduce the following notation. We say that the sequence $\{y^h\}$ is of moderate torsional velocity if we have the following bound on the first component of $\operatorname{curl} \partial_t y(t)$: 
\begin{equation}\label{eq:no-blowup}
    \int_{0}^{T} \left\| \frac1h\partial_{x_2} \partial_t y^h_3(t) - \frac1h\partial_{x_3} \partial_t y^h_2 (t) \right\|_{H^{-1}} \, \dt \leq C h. 
\end{equation}

\begin{remark}
With the focus of the present work on aspects of dimension reduction, we do not establish existence results for the system \eqref{eqyhforte}-\eqref{condizinizyhforte} and the inequalities \eqref{eqenergiayh}-\eqref{eq:no-blowup} here. Yet, we point out that in \cite{A-M-M-timeex}, the authors study this system in the case of plates and obtain long-time existence for the nonlinear system with sufficiently small $h$ for well-prepared initial data.
\end{remark}

We will see in the next section that $y^h$ converges to $(x_1, 0, 0)^T$. Therefore, to obtain non-trivial information about the asymptotic behavior, it is convenient to study the following rescaled displacements. Following \cite{MM-Gamma-VK}, we let $u^h,\,v^h_2,\,v^h_3,\,w^h\colon (0,T) \times \Omega \to \R$ defined as
\begin{equation}\label{DEFuh}
    u^h(t,x_1):=\frac{1}{h^2}\int_S(y^h_1(t,x)-x_1)\, \dx',
\end{equation}
\begin{equation}\label{DEFvhk}
    v^h_k(t,x_1):=\frac{1}{h}\int_Sy^h_k(t,x)\, \dx' \quad (k=2,3),
\end{equation}
\begin{equation}\label{DEFwh}
    w^h(t,x_1):=\frac{1}{h^2}\int_S \frac{x_2y^h_3(t,x)-x_3y^h_2(t,x)}{\mu(S)} \, \dx',
\end{equation}
where $\mu(S):=\int_S x^2_2+x^2_3 \, \dx'$. In addition, we introduce a matrix that may account for a nontrivial average rotational stresses in case \eqref{eq:no-blowup} is violated:
\begin{equation}\label{eq:B-h}
    B^h(t,x_1):=\frac{1}{h^3}\int_S (Id - (D_hy^h(t,x))^T)  \, \dx' \int_S DW(D_h y^h(t,x))  \, \dx'. 
\end{equation}
We will see in Theorem \ref{MAINTHEOREM}, that these quantities converge in a suitable sense to functions $u$, $v_2$, $v_3$, $w$ and $B$, respectively, that satisfy a time dependent limiting problem. In the following $\mathcal{L}:\R^{3\times 3} \to \R^{3\times 3}$ is the linear mapping given by $\mathcal{L}F_1: F_2 = D^2W(Id)[F_1, F_2]$ and we set $Q(F_1) = \mathcal{L} F_1: F_1$, $F_1, F_2 \in \R^{3\times 3}$. We define the stress tensor $E \colon (0,T) \times \Omega \to \R$ as
\begin{equation}\label{eq:E-def}
E(t, x) := \mathcal{L}\left( \theta(t,x) \mid \partial_{x_2}\alpha(t,x) \mid \partial_{x_3}\alpha(t,x) \right),
\end{equation}
where
\begin{equation}\label{eq:theta-def}
    \theta(t,x) :=\left(\!\partial_{x_1}u(t, x_1) + \frac{1}{2} \left( (\partial_{x_1}v_{2}(t,x_1))^2 + (\partial_{x_1}v_{3}(t,x_1))^2 \right)\!\right) e_1 + \partial_{x_1}A(t,x_1)(0, x')^T\!,
\end{equation}
\begin{equation}\label{Arappresentation1}
A= \begin{pmatrix}
0 & -\partial_{x_1} v_2 & -\partial_{x_1} v_3\\
\partial_{x_1} v_2 & 0 & -w\\
\partial_{x_1} v_3 & w & 0
\end{pmatrix},
    \end{equation}
and $\alpha \colon (0,T)\times \Omega \to \Rdd$ is the unique minimizer of the problem
\begin{equation}\label{eq:alpha-def}
    \min_{\psi \in \mathcal{V}} \int_{\Omega} Q( re_1+F(0,x')^T\,|\, \partial_{x_2} \psi \,|\, \partial_{x_3} \psi)\,\dx,
\end{equation}
with $r=\partial_{x_1}u + \frac{1}{2} ( \partial_{x_1}v^2_{2} + \partial_{x_1}v^2_{3} )$, $F=\partial_{x_1}A$, and 
\begin{equation}\label{defmathcalV}
    \mathcal{V}:=\left\{\psi \in H^1(S;\Rd)\,\Big|\, \int_S \psi\,\dx'{=0\text{ and }}\int_S \psi \cdot (0,-x_3,x_2)^T \dx'=0\right\}.
\end{equation} 
Moreover, we define the moments of the stress tensor $E^0,\,E^2,\, E^3 \colon (0,T) \times \Omega \to \R$ as 
\begin{equation}
    E^0(t,x_1):=\int_S E(t,x) \,\dx^\prime, \quad \quad E^{ k}(t,x_1):=\int_S E(t,x) x_{ k} \,\dx^\prime \quad{ (k=2,3)},
\end{equation}
for a.e. $t \in (0,T)$ and $x_1 \in (0,L)$. Finally we let 
\begin{equation}\label{eq:rho-sigma-def}
    \rho_k := (B - AE^0)_{k,1}\quad (k=2,3) 
    \quad\text{and}\quad 
    \sigma := (B - AE^0)_{3,2}-(B - AE^0)_{2,3}.
\end{equation}
We will see later that in fact $\sigma=B_{3,2}-B_{2,3}$. The limiting equations satisfied by $u$, $v_2$, $v_3$, and $w$ are
\begin{equation}\label{eqlimitingeqFORTE1}
\begin{gathered}
\partial_{x_1}E_{11}^0=0 \quad \textnormal{in } (0,T)\times (0,L), \\
-\partial^2_{tt}{v}_2+ (\partial^2_{x_1x_1}v_2) E_{11}^0+\partial^2_{x_1x_1}E_{11}^2+ \partial_{x_1}\rho_2+ f_2=0 \quad \textnormal{in } (0,T)\times (0,L),\\
-\partial^2_{tt}{v}_3+(\partial^2_{x_1x_1}v_3) E_{11}^0+\partial^2_{x_1x_1}E_{11}^3+ \partial_{x_1}\rho_3+ f_3=0 \quad \textnormal{in } (0,T)\times (0,L),\\
\partial_{x_1}(E_{31}^2 - E_{21}^3) - \sigma=0 \quad \textnormal{in } (0,T)\times (0,L),
\end{gathered}
\end{equation}
with boundary conditions
\begin{equation}\label{dirichletlimite1}
    u(t,x_1)=v_2(t,x_1)=v_3(t,x_1)=w(t,x_1)=0 \quad \textnormal{in } (0,T) \times \{0,L\},
\end{equation}
\begin{equation}\label{dirichletlimite1.1}
    \partial_{x_1}v_2(t,x_1)=\partial_{x_1}v_3(t,x_1)=0 \quad \textnormal{in } (0,T)\times \{0,L\},
\end{equation}
and initial conditions
\begin{equation}\label{datiiniz1}
    { v_k(0, x_1)=v^P_k(x_1) \quad\text{and}\quad \partial_t v_k(0, x_1)=v^S_k(x_1) \quad(k=2,3)\quad\text{in }(0,L).}
\end{equation}
The functions $v^P_k$ and $v^S_k$ are defined as the limits in a suitable sense of subsequences of $v^{P,h}_k$ and $v^{S,h}_k$, where
\begin{equation}\label{datiinizrescalin1}
    v^{P,h}_k(x_1):=\frac{1}{h}\int_S (y^h_P)_k(x) \,\dx'\quad x_1\in (0,L),
\end{equation}
\begin{equation}\label{datiinizrescalin1-2}
    v^{S,h}_k(x_1):=\frac{1}{h}\int_S (y^h_S)_k(x) \,\dx' \quad x_1\in (0,L),
\end{equation}
with $k=2,\,3$. 

We are now in a position to state the main result of this paper.
\begin{theorem}\label{MAINTHEOREM}
    Assume \eqref{eq:centroids}-\eqref{Wlineargrowth} and \eqref{conditionsonF}-\eqref{eqenergiadatiinizialiyh}. Assume that there exists $h_0>0$ such that for every $h \in (0,h_0)$ there exists a weak solution $y^h$ of \eqref{defyhspazifunz}-\eqref{condizinizyhforte} that satisfies the energy inequality \eqref{eqenergiayh}. Let $u^h$, $v^h_2$, $v^h_3$, $w^h$ and $B^h$ defined as in \eqref{DEFuh}-\eqref{eq:B-h}. Let $ v^{P,h}_2 $, $v^{P,h}_3$, $v^{S,h}_2$, and $v^{S,h}_3$ defined as in \eqref{datiinizrescalin1}-\eqref{datiinizrescalin1-2}.     
    Then, $y^h \to (x_1, 0, 0)^T$ strongly in $L^\infty(0,T;H^1(\Omega; \Rd))$ and there exist $u \in L^\infty(0,T; H^1_0(0,L))$, $v_k\in L^\infty(0,T; H^2_0(0,L))\cap W^{1,\infty}(0,T; L^2(0,L)) \cap W^{2,\infty}(0,T; H^{-2}(0,L))$, $k=2,3$, $w\in L^\infty(0,T; H^1_0(0,L))$, $A\in L^\infty(0,T; H^1_0(0,L;\Rdd))$, $B\in L^\infty(0,T; L^2(0,L;\Rdd))$, such that, up to subsequences, we have the following conditions:
\begin{itemize}
    \item[(i)] $u^h \overset{\ast}{\rightharpoonup} u$ weakly* in $L^\infty(0,T; H^1_0(0,L))$;
    \item[(ii)] for $k=2,3$ it holds \begin{itemize}
        \item[(ii-a)] $v^h_k \overset{\ast}{\rightharpoonup} v_k$ weakly* in $L^\infty(0,T; H^1_0(0,L))$,
        \item[(ii-b)] $v^h_k \to v_k$ strongly in $L^\infty
        (0,T; L^2(0,L))$,
        \item[(ii-c)] $\partial_t v^h_k \overset{\ast}{\rightharpoonup} \partial_t v_k$ weakly* in $L^\infty(0,T; L^2(0,L))$;
    \end{itemize}
    \item[(iii)] $w^h \overset{\ast}{\rightharpoonup} w$ weakly* in $L^\infty(0,T; H^1_0(0,L))$;
   \item[(iv)] $\frac{D_hy^h- Id}{h} \rightharpoonup A$ weakly in $L^2(0,T; L^2(\Omega;\Rdd))$ and $A$ satisfies the representation in \eqref{Arappresentation1};
    \item[(v)] $B^h \rightharpoonup B$ weakly in $L^2(0,T; L^1(\Omega;\Rdd))$ and $B = A E^0$ if \eqref{eq:no-blowup} holds true.
\end{itemize}
For $k=2,\,3$, there exist functions $v^P_k \in H^1(0,L)$ and $v^S_k \in L^2(0,L)$, such that
\begin{itemize}
    \item[(vi)] $v^{P,h}_k \to v^P_k$ strongly in $H^1(0,L)$;
    \item[(vii)] $v^{{ S},h}_k \rightharpoonup v^{{ S}}_k$ weakly in $L^2(0,L)$.
\end{itemize}
Moreover, the functions $u$, $v_2$, $v_3$, and $w$ are weak solutions of the system \eqref{eqlimitingeqFORTE1}-\eqref{datiiniz1}. 

\end{theorem}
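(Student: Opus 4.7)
The plan is to follow the classical dimension reduction strategy for dynamical von Kármán problems: obtain $h$-uniform energy bounds, apply geometric rigidity to identify the scaled-strain limits, and pass to the limit in the weak formulation of \eqref{eqyhforte} against test functions encoding the structure of \eqref{Arappresentation1}--\eqref{eq:alpha-def}. The single hardest point will be the treatment of the defect matrix $B^h$.

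\textbf{Compactness.} First I would apply Gronwall's inequality to \eqref{eqenergiayh}, combined with \eqref{eqenergiadatiinizialiyh} and the $L^2$-bound on $f$, to obtain
\[
\sup_{t\in(0,T)} \Big( h^2 \|\partial_t y^h(t)\|_{L^2(\Om)}^2 + \|W(D_h y^h(t))\|_{L^1(\Om)} \Big) \le C h^4.
\]
Then I would apply the Friesecke--James--Müller quantitative rigidity estimate on sub-cylinders $(a,a+h) \times S$ and patch the resulting rotations into a field $R^h(t,x_1) \in \operatorname{SO}(3)$ with $\|D_h y^h - R^h\|_{L^2}^2 \le Ch^4$ and $\|\partial_{x_1} R^h\|_{L^2}^2 \le Ch^2$. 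From these one derives $y^h \to (x_1, 0, 0)^T$ strongly in $L^\infty(0,T; H^1)$, the uniform bounds needed for (i)--(iii) and (vii), and via Aubin--Lions (using \eqref{eqyhforte} and \eqref{controllodifferenzialeDW} to bound $\partial^2_{tt} v^h_k$ in $L^2(0,T; H^{-2})$) the strong convergences in (ii-b) and (vi).

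\textbf{Structure of $A$ and the limit equations.} I would decompose $A^h := h^{-1}(D_h y^h - Id) = h^{-1}(R^h - Id) + h^{-1}(D_h y^h - R^h)$, noting that the second term is $O(h)$ in $L^2$ and that $R^h \in \operatorname{SO}(3)$ forces the weak limit $A$ to be a.e.\ antisymmetric; the representation \eqref{Arappresentation1} follows by matching cross-sectional averages of the columns of $A^h$ with $\partial_{x_1} v^h_k$ and $w^h$ through \eqref{DEFvhk}--\eqref{DEFwh}. Setting $G^h := h^{-2}((R^h)^T D_h y^h - Id)$, bounded in $L^2$ by rigidity, frame indifference \eqref{Wframeindiff} gives $DW(D_h y^h) = R^h DW(Id + h^2 G^h)$, and a Taylor expansion via \eqref{Wregularity} yields $h^{-2} DW(D_h y^h) \rightharpoonup \mathcal{L}(\operatorname{sym} G) = E$ weakly in $L^2$. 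The quadratic contribution $\tfrac{1}{2}((\partial_{x_1} v_2)^2 + (\partial_{x_1} v_3)^2)$ in \eqref{eq:theta-def} arises from the $(1,1)$-entry of $(A)^T A/2$ with $A$ as in \eqref{Arappresentation1}, and the $x'$-corrector $\alpha \in \mathcal{V}$ is identified as the minimizer in \eqref{eq:alpha-def} by testing with cross-section-only variations. Testing the weak formulation of \eqref{eqyhforte} with $\varphi(t,x_1) e_1$, $\varphi(t,x_1) x_k e_1$, $\varphi(t,x_1) e_k$, and $\varphi(t,x_1)(0,-x_3,x_2)^T$ and passing to the limit produces the four equations in \eqref{eqlimitingeqFORTE1}, with the moments $E^0, E^2, E^3$ appearing after $x'$-integration and the second $x_1$-derivatives from twofold integration by parts.

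\textbf{The defect $B^h$ and the main obstacle.} Writing $Id - (D_h y^h)^T = -h(A^h)^T$ and using $DW(D_h y^h) \approx h^2 R^h \mathcal{L} G^h$ from the previous step, one finds
\[
B^h \approx -\int_S (A^h)^T\,\dx' \cdot \int_S R^h \mathcal{L} G^h\,\dx',
\]
whose two factors converge only weakly. To reach the identification $B = A E^0$ in the limit (using $-A^T = A$), one needs strong time-compactness of the antisymmetric part of $A^h$; its torsional component $h^{-1}(\partial_{x_2} y^h_3 - \partial_{x_3} y^h_2)$ has time derivative controlled in $L^1(0,T; H^{-1})$ precisely by \eqref{eq:no-blowup}, so an Aubin--Lions argument then gives the required strong convergence and yields $B = A E^0$. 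Without \eqref{eq:no-blowup}, $B^h$ remains only weakly convergent in $L^2(0,T; L^1)$ (bounded by Cauchy--Schwarz on the two factors), and its weak limit $B$ retains a genuine torsional defect; this defect enters the distributional limit of \eqref{eqyhforte} linearly through $\partial_{x_1}\rho_k$ and $\sigma$, for which the $L^2(L^1)$ bound is exactly what is needed. The remaining initial-data identifications follow from the $t=0$ trace of the energy bound together with weak continuity.
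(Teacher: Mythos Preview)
Your overall strategy matches the paper's: Gronwall for uniform energy bounds, FJM rigidity to build $R^h$, compactness for $u^h,v^h_k,w^h,A^h,G^h$, Taylor/linearization for $E^h\rightharpoonup E$, and testing \eqref{eqyhforte} against the same family of test functions. The treatment of $B^h$ via the product $A^hE^{h,0}$ and the role of \eqref{eq:no-blowup} for strong convergence of the torsional entry of $A^h$ are also as in the paper. Two technical points, however, are genuine gaps.

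\textbf{The claimed bound on $\partial^2_{tt}v^h_k$ is false.} Testing \eqref{eqyhforte} with $\phi(t,x_1)e_k$ and dividing by $h$ leaves the term $h^{-1}E^{h,0}_{k1}$, which is only $O(h^{-1})$ in $L^\infty(0,T;L^2(0,L))$; hence $\partial^2_{tt}v^h_k$ is \emph{not} bounded in $L^2(0,T;H^{-2})$ uniformly in $h$. For (ii-b) this is harmless, since the bound $v^h_k\in L^\infty(0,T;H^1_0)\cap W^{1,\infty}(0,T;L^2)$ already gives compactness in $L^\infty(0,T;L^2)$ via Simon. But it breaks your verification of the initial condition $\partial_t v_k(0)=v^S_k$: ``the $t=0$ trace of the energy bound together with weak continuity'' is insufficient, because $\partial_t v^h_k$ only converges weakly* in $L^\infty(0,T;L^2)$ and you have no control on its time oscillations. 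The paper's fix is to test additionally with $x_k\partial_{x_1}\phi\,e_1$ and \emph{subtract}: the dangerous $h^{-1}E^{h,0}_{k1}$ and $h^{-1}E^{h,0}_{1k}$ terms then cancel up to $O(h)$, yielding a uniform $L^2(0,T;H^{-2})$ bound on the corrected quantity $\partial^2_{tt}(v^h_k+\partial_{x_1}\hat y^{h,k}_1)$ with $\hat y^{h,k}:=\int_S x_k y^h\,\dx'$; compactness in $C([0,T];H^{-2})$ then identifies the trace. (Also, (vi) concerns the static initial datum $y^h_P$ and follows from applying the rigidity/compactness argument to it, not from Aubin--Lions.)

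\textbf{You omit the approximate symmetry of $E^h$.} The cancellation just described, as well as the derivation of the second and third equations in \eqref{eqlimitingeqFORTE1} from the pair of test functions $\phi e_k$ and $x_k\phi e_1$, requires
\[
\|E^h-(E^h)^T\|_{L^1(\Omega)}\le Ch^2,
\]
which follows from frame indifference via the identity that $DW(F)F^T$ is symmetric (so $E^h-(E^h)^T=-h^2\bigl(E^h(G^h)^T-G^h(E^h)^T\bigr)$). Without this, the terms $h^{-1}E^{h,0}_{k1}$ (from $\phi e_k$) and $h^{-1}E^{h,0}_{1k}$ (from $x_k\phi e_1$) cannot be identified, and neither the limiting equations nor the corrected second-time-derivative bound go through. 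Your sketch should make this step explicit.
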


\begin{remark}\label{rem:slow-torsion}
From the previous theorem and \eqref{eq:rho-sigma-def} we get that $\rho_2$, $\rho_3$, and $\sigma$ belong to $L^\infty(0,T;L^2(0,L))$. Moreover, if \eqref{eq:no-blowup} is satisfied, then $\rho_2=\rho_3=\sigma = 0$. In this case the system in \eqref{eqlimitingeqFORTE1} aligns with the results presented in \cite{Ameismeier}, which concern the convergence of critical points of the energy functional in the setting of periodic boundary conditions. See also \cite{BukalPawelczykVelcic:17} and \cite{DavoliScardia:12}.
\end{remark}

\begin{remark}
    The definition of a weak solution of the problem \eqref{eqlimitingeqFORTE1}-\eqref{datiiniz1} is analogous to that of the \eqref{defyhspazifunz}-\eqref{condizinizyhforte}, with obvious differences. For the sake of clarity, we specify that $u$, $v_2$, $v_3$, and $w$ are weak solutions of \eqref{eqlimitingeqFORTE1}-\eqref{datiiniz1} if \eqref{eqlimitingeqFORTE1} is satisfied in the following sense:
    \begin{equation*}
\begin{gathered}
\int^{T}_0 \int_0^L E^0_{11} \partial_{x_1}\varphi \,\dx_1\dt=0, \\
 \int^T_0 \int^L_0 \left( \partial_t v_2 \, \partial_t \phi - (\partial_{x_1}v_2 E^{0}_{11} + \rho_2) \partial_{x_1}\phi + E^{2}_{11} \partial^2_{x_1x_1}\phi +   f_2 \phi \right) \,\dx_1\dt=0,\\
 \int^T_0 \int^L_0 \left( \partial_t v_3 \, \partial_t \psi -  (\partial_{x_1}v_3 E^{0}_{11} + \rho_3) \partial_{x_1}\psi + E^{3}_{11} \partial^2_{x_1x_1}\psi +   f_3 \psi \right) \,\dx_1\dt=0,\\
\int^{T}_0 \int_0^L \left((E_{31}^2 - E_{21}^3)  \partial_{x_1}\eta + \sigma \eta\right) \,\dx_1\dt=0 ,
\end{gathered}
\end{equation*}
for every $\varphi,\,\eta \in L^2(0,T; H^1_0(0,L))$ and $\phi, \,\psi \in H^1_0(0,T; L^2(0,L))\cap L^2(0,T; H^2_0(0,L))$. The boundary conditions \eqref{dirichletlimite1} and \eqref{dirichletlimite1.1} are satisfied in the sense of traces. Noting that $v_k \in C([0,T]; H^2_{\textnormal{weak}}(0,L))$ and $\partial_t v_k \in C([0,T]; L^2_{\textnormal{weak}}(0,L))$ the initial conditions \eqref{datiiniz1} are satisfied as 
\begin{equation*}
    v_k(t) \rightharpoonup v^P_k \textnormal{ in } H^2(\Omega) 
    \quad\textnormal{and}\quad 
    \partial_t v_k(t) \rightharpoonup v^S_k \textnormal{ in } L^2(\Omega), \quad k=2,3, 
\end{equation*}
as $t \to 0^+$ (see Lemma \ref{LemmaWeakCont}).
\end{remark}

\section{Approximation and Compactness Results}

This section is devoted to the study of compactness properties of sequences of displacements and of suitably rescaled quantities. This will play a fundamental role in the passage to the limit in the equations. 

\begin{remark}
    We note that a straightforward application of Gronwall's lemma, combined with \eqref{eqenergiadatiinizialiyh} and \eqref{eqenergiayh}, allows us to derive the following two estimates, which will be used at various points throughout the paper:
\begin{equation}\label{eq:bound-time-der}
    \| \partial_t y^h(t) \|^2_{L^2} \leq C h^2, 
\end{equation}
\begin{equation}\label{eq:bound-elliptic-part}
    \int_\Omega W( D_h y^h(t,x) ) \,\text{d}x \leq C h^4
\end{equation}
for a.e. $t\in (0,T)$. Using the energy bounds established above, we obtain the following approximation result, which will play a fundamental role in the proof of the compactness results stated in Theorem \ref{thm:compact}.
\end{remark}

\begin{proposition}\label{thm:Rh}
 Let $\{y^h\}_{h \in (0, h_0)} \subset L^2((0,T); H^1(\Omega;\R^3))$ be a sequence that satisfies \eqref{eq:Dir-cond} and \eqref{eq:bound-elliptic-part} uniformly in $t\in (0,T)$. There exists a sequence $$\{R^h\}_{h \in (0, h_0)} \subset L^\infty(0,T; H^1(0,L; \Rdd))$$ such that $R^h(t, \cdot) \in C^\infty (0,L; \Rdd)$ for a.e. $t \in (0,T)$ and satisfying
\begin{itemize}
    \item[i)] $R^h(t, x_1) \in \text{SO}(3)$ for every $x_1\in (0,L)$, for a.e. $t\in (0,T)$,\vspace{0,2 cm}
    \item[ii)] $\| D_h y^h(t) - R^h(t) \|_{L^2} \leq C h^2$, for a.e. $t\in (0,T)$,\vspace{0,2 cm}
    \item[iii)] $\| R^h(t) - Id \|_{L^\infty} \leq C h$, $\| \partial_1 R^h(t) \|_{L^2} \leq Ch$, for a.e. $t\in (0,T)$,\vspace{0,2 cm}
    \item[iv)] $|R^h(t,0) - Id | \leq C h^{3/2}$, $|R^h(t,L) - Id | \leq C h^{3/2}$ for a.e. $t\in (0,T)$.
\end{itemize}
\end{proposition}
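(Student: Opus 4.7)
The plan is to apply the Friesecke--James--M\"uller (FJM) geometric rigidity theorem on short slabs of length $\sim h$ and then smooth the resulting piecewise-constant rotation field. All estimates are pointwise in $t$ with $t$-independent constants, using \eqref{eq:bound-elliptic-part} together with the coercivity $W(F)\geq C\operatorname{dist}^2(F,\operatorname{SO}(3))$ to obtain $\int_{\Omega}\operatorname{dist}^2(D_h y^h(t),\operatorname{SO}(3))\,\dx \leq Ch^4$ for a.e.\ $t$. The $L^\infty(0,T)$ conclusion and the required measurability in $t$ then follow from standard measurable-selection arguments applied to the rigidity construction (since the $R^h_k$ below can be taken as polar projections of cross-section averages, which depend measurably on $t$). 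I work in the unscaled rod $\Omega_h$ with $\xi^h(z):=y^h(z_1, z'/h)$, so that $\nabla_z\xi^h$ rescales to $D_h y^h$ with Jacobian $h^2$ and $\int_{\Omega_h}\operatorname{dist}^2(\nabla\xi^h,\operatorname{SO}(3))\,dz\leq Ch^6$.

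First I partition $(0,L)$ into intervals $I_k=(kh,(k+1)h)$ and apply FJM rigidity on each slab $I_k\times hS$, a Lipschitz domain of diameter $\sim h$ whose rigidity constant is $h$-independent by scaling. This produces $R^h_k\in\operatorname{SO}(3)$ with
\[ \int_{I_k\times hS}|\nabla\xi^h - R^h_k|^2\,dz \leq C\int_{I_k\times hS}\operatorname{dist}^2(\nabla\xi^h,\operatorname{SO}(3))\,dz, \]
and summation over $k$, followed by rescaling, yields $\|D_h y^h - R^h_{\mathrm{pc}}\|_{L^2(\Omega)}\leq Ch^2$ for the piecewise constant $R^h_{\mathrm{pc}}(x_1)=R^h_k$ on $I_k$, which is item (ii) in piecewise-constant form. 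Applying FJM rigidity on the doubled slabs $(I_k\cup I_{k+1})\times hS$ and comparing the resulting intermediate rotations with $R^h_k,R^h_{k+1}$ in $L^2$-average yields $|R^h_k - R^h_{k+1}|^2 \leq Ch^{-3}\int_{(I_k\cup I_{k+1})\times hS}\operatorname{dist}^2(\nabla\xi^h,\operatorname{SO}(3))\,dz$, hence $\sum_k|R^h_{k+1}-R^h_k|^2 \leq Ch^3$. I then linearly interpolate the $R^h_k$ between midpoints of consecutive $I_k$, extend by $\mathrm{Id}$ outside $(0,L)$ (consistent with the Dirichlet data), convolve with a smooth mollifier of scale $\ll h$, and project pointwise onto $\operatorname{SO}(3)$ via the polar decomposition (smooth and well defined in the $O(h)$-neighborhood of $\mathrm{Id}$). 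The result is $R^h(t,\cdot)\in C^\infty((0,L);\operatorname{SO}(3))$ satisfying (i), preserving the $L^2$ bound of (ii), and obeying $\|\partial_{x_1}R^h\|_{L^2(0,L)}^2 = \sum_k h^{-1}|R^h_{k+1}-R^h_k|^2 \leq Ch^2$, which is the gradient bound in (iii).

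The main obstacle is item (iv): FJM rigidity alone yields only $|R^h_0 - \mathrm{Id}|=O(h)$, and sharpening the decay to $O(h^{3/2})$ requires exploiting the Dirichlet data $\xi^h(0,z')=(0,z')$. The key trick is to extend $\xi^h$ to the left of $\{0\}\times hS$ by the identity: set $\hat\xi^h(z):=z$ on $(-h,0)\times hS$ and $\hat\xi^h:=\xi^h$ on $(0,h)\times hS$. The boundary condition ensures $\hat\xi^h\in H^1((-h,h)\times hS)$, and $\operatorname{dist}(\nabla\hat\xi^h,\operatorname{SO}(3))$ vanishes identically on the extension. Applying FJM rigidity to the doubled slab produces $\tilde R\in\operatorname{SO}(3)$ with $\int_{(-h,h)\times hS}|\nabla\hat\xi^h - \tilde R|^2\,dz\leq Ch^6$; restricting to $(-h,0)\times hS$, where the integrand is the constant $|\mathrm{Id}-\tilde R|^2$ on a set of volume $h^3$, gives $|\mathrm{Id}-\tilde R|\leq Ch^{3/2}$. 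Comparing with the rigidity bound on $(0,h)\times hS$ via the triangle inequality gives $|\tilde R - R^h_0|\leq Ch^{3/2}$, and therefore $|R^h_0 - \mathrm{Id}|\leq Ch^{3/2}$. An identical argument at $x_1=L$ closes (iv), and the smoothing of the previous step preserves this estimate up to $O(h^{3/2})$ because the $R^h_k$ for $k$ near $0$ differ from $R^h_0$ by at most $O(h^{3/2})$ by the same inequality.

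Finally, the $L^\infty$ bound in (iii) follows from (iv) and the gradient estimate: for any $x_1\in(0,L)$, $|R^h(x_1)-\mathrm{Id}| \leq \sqrt{L}\,\|\partial_{x_1}R^h\|_{L^2} + |R^h(0)-\mathrm{Id}| \leq Ch + Ch^{3/2} \leq Ch$. I expect the boundary analysis of (iv) to be the technical core of the proof: it is the only step where the Dirichlet data \eqref{eq:Dir-cond} enters in an essential way, and the $h^{3/2}$ rate is the sharp exponent supplied by the volume $h^3$ of a single boundary slab via the extension trick.
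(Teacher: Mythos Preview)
Your proposal is correct and follows essentially the same approach as the paper. The paper's proof merely cites \cite{MM-Gamma-VK} (via \cite{Frieseke-James-Muller}) for items i)--iii) in the static case, refers to \cite{M-M-S,BukalPawelczykVelcic:17} for item iv), and then remarks that the time-dependent case follows either by repeating the static argument pointwise in $t$ or by approximation with simple functions; what you have written is precisely a reconstruction of those cited arguments, including the slab decomposition, the difference estimate on overlapping slabs, the smoothing/projection step, and the identity-extension trick at the Dirichlet boundary that yields the sharp $h^{3/2}$ rate in iv).
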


\begin{proof}
For static $y$ that do not depend on $t$ i), ii) and iii) are proved in \cite{MM-Gamma-VK} with the help of the geometric rigidity theorem from \cite{Frieseke-James-Muller}. Indeed, their construction also yields iv), cp.\ \cite{M-M-S,BukalPawelczykVelcic:17}. The time-dependent case can be addressed either by following the proof of the stationary case with suitable modifications, or alternatively, by reducing it to the static case through an approximation of $ y^h $ by simple functions $(0,T) \to H^1(\Omega;\mathbb{R}^3)$.
\end{proof}

We now introduce the following two tensors, which will play a fundamental role in handling the nonlinear part of the equation and in facilitating the passage to the limit. Let us define $A^h,\,G^h\colon (0,T) \times \Omega \to \Rdd$ as 
\begin{equation}\label{eq:Ah}
    A^h(t,x_1):=\frac{R^h(t,x_1)- Id}{h},
\end{equation}
\begin{equation}
    G^h(t,x):=\frac{(R^h(t,x_1))^TD_hy^h(t,x)- Id}{h^2},
\end{equation}
$\text{for a.e. }t \in (0,T)\,\,\,\text{for a.e. }x \in \Omega$.
Using Proposition \ref{thm:Rh} and estimates \eqref{eq:bound-time-der}-\eqref{eq:bound-elliptic-part}, we are now in a position to study the compactness properties of the displacement and the other rescaled quantities.
\begin{theorem}\label{thm:compact}
    Let $\{y^h\}_{h \in (0, h_0)} \subset L^2((0,T); H^1_{0}(\Omega;\R^3)) \cap W^{1,\infty}((0,T); L^2(\Omega;\R^3))$ satisfy \eqref{eq:Dir-cond}, \eqref{eq:bound-time-der} and \eqref{eq:bound-elliptic-part} uniformly in $t\in (0,T)$. 
    Then we have $y^h \to (x_1, 0, 0)^T$ strongly in $L^\infty(0,T;H^1(\Omega; \Rd))$. Moreover, there exist $u \in L^\infty(0,T; H^1_0(0,L))$, $v_2,\,v_3\in L^\infty(0,T; H^2_0(0,L))\cap W^{1,\infty}(0,T; L^2(0,L))$, $w\in L^\infty(0,T; H^1_0(0,L))$, $A\in L^\infty(0,T; H^1_0(0,L))$, and $G\in L^\infty(0,T; L^2(0,L))$ such that, up to subsequences, we have the following conditions:
\begin{itemize}
    \item[(a)] $u^h \overset{\ast}{\rightharpoonup} u$ weakly* in $L^\infty(0,T; H^1_0(0,L))$;
    \item[(b)] for $k=2,3$ it holds \begin{itemize}
        \item[b1)] $v^h_k \overset{\ast}{\rightharpoonup} v_k$ weakly* in $L^\infty(0,T; H^1_0(0,L))$,
        \item[b2)] $v^h_k \to v_k$ strongly in $L^\infty
        (0,T; L^2(0,L))$,
        \item[b3)] $\partial_t v^h_k \overset{\ast}{\rightharpoonup} \partial_t v_k$ weakly* in $L^\infty(0,T; L^2(0,L))$;
    \end{itemize}
    \item[(c)] $w^h \overset{\ast}{\rightharpoonup} w$ weakly* in $L^\infty(0,T; H^1_0(0,L))$;
    \item[(d)]
    \begin{itemize}
        \item[d1)] $A^h \overset{\ast}{\rightharpoonup} A$ weakly* in $L^\infty(0,T; H^1(0,L;\Rdd))$,
        \item[d2)] $A^h_{ij} \to A_{ij}$ strongly in $L^p(0,T; L^\infty(0,L))$ for every $p \in [1,\infty)$ and for all $i,j\in\{1,2,3\}$, unless $(i,j)=(2,3)$ or $(i,j)=(3,2)$. If \eqref{eq:no-blowup} is satisfied, this even holds for all $i,j\in\{1,2,3\}$;
        \item[d3)] it holds \begin{equation}\label{representationA}
        A= \begin{pmatrix}
0 & -\partial_{x_1} v_2 & -\partial_{x_1} v_3\\
\partial_{x_1} v_2 & 0 & -w\\
\partial_{x_1} v_3 & w & 0
\end{pmatrix},
    \end{equation}
    \end{itemize}
    \item[(e)] $\frac{D_hy^h- Id}{h} \overset{\ast}{\rightharpoonup} A$ weakly* in $L^\infty (0,T; L^2(\Omega;\Rdd))$;
    \item[(f)] $\textnormal{sym}\big(\frac{R^h- Id}{h^2}\big)$ is bounded in $L^\infty(0,T; L^\infty(0,L;\Rdd))$ and $\textnormal{sym}\big(\frac{R^h- Id}{h^2}\big)_{11} \to \big(\frac{A^2}{2}\big)_{11}$ strongly in $L^p(0,T; {L^\infty(0,L)})$ for every $p \in [1,\infty)$; 
    \item[(g)] $G^h \overset{\ast}{\rightharpoonup} G$ weakly* in $L^\infty(0,T; L^2(0,L;\Rdd))$ and, setting $\tilde{G}:=\operatorname{sym} G$, it holds
   \begin{equation*}
\scalebox{0.99}{$
\quad\tilde{G} : = \operatorname{sym}\! \left( 
\partial_{x_1} A\! \begin{pmatrix}
0 \\
x_2 \\
x_3
\end{pmatrix} \!+ \!
\left( \partial_{x_1} u\! +\! \frac{1}{2}(\partial_{x_1} v_2^2\! + \!\partial_{x_1} v_3^2) \right) e_1 
\,\Big\lvert\, \partial_{x_2} \beta 
\,\Big\lvert\, \partial_{x_3} \beta 
\right),
$}
\end{equation*}
for some $\beta \in L^\infty(0,T; L^2(0,L;H^1(S;\Rd)))$.
    \end{itemize}
\end{theorem}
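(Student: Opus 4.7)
The entire proof rests on Proposition \ref{thm:Rh}. Its part iii) shows that $A^h=(R^h-Id)/h$ is bounded in $L^\infty(0,T;H^1(0,L;\Rdd))$, which yields a weak-$\ast$ limit $A$ (claim d1); part iv) transfers the boundary behavior to the limit. Splitting $(D_hy^h-Id)/h=A^h+(D_hy^h-R^h)/h$ and applying part ii) of the proposition shows that the second summand is $O(h)$ in $L^\infty(0,T;L^2)$, giving boundedness of the scaled gradient in $L^\infty(0,T;L^2(\Omega;\Rdd))$ and the same weak-$\ast$ limit $A$, which is claim (e). Combining this $L^\infty(L^2)$-bound of order $h$ with the Dirichlet condition \eqref{eq:Dir-cond} and Poincaré yields the strong convergence $y^h\to(x_1,0,0)^T$ in $L^\infty(0,T;H^1(\Omega;\Rd))$ claimed first.

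Next I derive the structural properties of $A$. Skew-symmetry follows from $(R^h)^TR^h=Id$, equivalently $A^h+(A^h)^T+h(A^h)^TA^h=0$: the last term is $O(h)$ in $L^\infty$, so $A+A^T=0$ in the limit. For the representation (d3) I exploit that $A^h$ depends only on $(t,x_1)$: averaging the $(k,1)$-entry of $(D_hy^h-Id)/h$ over $S$ reproduces $\partial_{x_1}v^h_k$ for $k=2,3$, so $A_{k,1}=\partial_{x_1}v_k$, and the $(1,k)$-entries follow by skew-symmetry. The torsional entry $A_{32}$ is identified with $w$ by taking $x_2$- and $x_3$-weighted moments of the scaled gradient and using the centroid normalization \eqref{eq:centroids}, which reproduces exactly the definition \eqref{DEFwh} in the limit. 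Claims (b1), (c) and the $L^\infty(H^2_0)$-bound on $v_k^h$ follow directly from (d3) combined with the $L^\infty(H^1)$-bound on $A^h$. For (f), a second-order Taylor expansion of $R^h\in\operatorname{SO}(3)$ yields $\operatorname{sym}(R^h-Id)/h^2=\tfrac{1}{2}(A^h)^2+O(h)$ in $L^\infty$, giving the boundedness part. For (a), I insert $D_hy^h=R^h+h^2R^hG^h$ into $\partial_{x_1}y^h_1=R^h_{11}+h^2(R^hG^h)_{11}$ and average over $S$ to find $\partial_{x_1}u^h=\tfrac{1}{2}((A^h)^2)_{11}+\int_S(R^hG^h)_{11}\,\dx'+O(h)$, which is bounded in $L^\infty(L^2)$; combined with $u^h|_{\{0,L\}}=0$ and Poincaré this gives the bound in $L^\infty(H^1_0)$.

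The strong convergences (b2), (d2) and the strong part of (f) are obtained by Aubin--Lions. For the non-torsional entries, $\|\partial_t v_k^h\|_{L^\infty(L^2)}\leq C$ comes from \eqref{eq:bound-time-der}, so combined with the $L^\infty(H^2_0)$-bound on $v_k^h$ one gets strong convergence in $C([0,T];L^2)$ (proving b2) and, via interpolation with the 1D Sobolev embedding $H^{1-\e}(0,L)\hookrightarrow L^\infty$, strong convergence of $\partial_{x_1}v_k^h$ in $L^p(0,T;L^\infty)$ (proving d2 for those entries). The torsional entries $A^h_{23}=-A^h_{32}$ are excluded precisely because the basic energy bound does not control $\partial_tw^h$; the assumption \eqref{eq:no-blowup} supplies exactly the missing $L^1(0,T;H^{-1})$-bound on $\partial_tA^h_{32}$ through the identification $A^h_{32}\approx(\operatorname{curl}_1 y^h)/(2h^2)$, closing Aubin--Lions also there. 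Finally, for (g) the bound on $G^h$ in $L^\infty(0,T;L^2(\Omega;\Rdd))$ follows directly from Proposition \ref{thm:Rh} ii); the representation of $\tilde G=\operatorname{sym} G$ is produced by plugging $D_hy^h=R^h+h^2R^hG^h$ into the definition and Taylor-expanding $R^h$ to second order for the first column (reproducing the $\theta$-like expression through the same computation used for (a)), while the two transversal columns are obtained by a cross-sectional relative-rigidity construction as in \cite{MM-Gamma-VK}, which extracts the warping field $\beta\in L^\infty(0,T;L^2(0,L;H^1(S;\Rd)))$. The main obstacle throughout is the clean separation between the torsional component and the rest: it drives both the structure of (d3) and the dichotomy in (d2) depending on whether \eqref{eq:no-blowup} is assumed.
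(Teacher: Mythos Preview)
Your overall architecture is right and most steps match the paper, but the argument you give for (d2) has a genuine gap. You write that ``the $L^\infty(H^2_0)$-bound on $v_k^h$ follow[s] directly from (d3)'' and then run Aubin--Lions plus 1D interpolation to obtain strong convergence of $\partial_{x_1}v_k^h$ in $L^p(0,T;L^\infty)$. But (d3) is a statement about the \emph{limit} $A$, not about $A^h$ or $v_k^h$ at finite $h$; the identity $A_{k1}=\partial_{x_1}v_k$ holds only after $h\to0$, and no $H^2$ bound on $v_k^h$ is available (nor is one proved in the paper). Moreover, even if you had strong $L^p(L^\infty)$ convergence of $\partial_{x_1}v_k^h$, this would not yield (d2) for $A^h_{k1}$: the difference $A^h_{k1}-\partial_{x_1}v_k^h=\tfrac1h\int_S(R^h-D_hy^h)_{k1}\,\dx'$ is controlled only in $L^\infty(0,T;L^2(0,L))$ via Proposition~\ref{thm:Rh}\,ii), which does not upgrade to $L^\infty(0,L)$.

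The paper establishes (d2) by working directly on $A^h$ through Simon's translation criterion (Theorem~\ref{thm:relatcompLp} with $X=H^1(0,L)$, $B=L^\infty(0,L)$, $Y=H^{-1}(0,L)$). The key decomposition is
\[
A^h(t+s)-A^h(t)=\tfrac1h\big[(R^h-D_hy^h)(t+s)+(D_hy^h(t+s)-D_hy^h(t))+(D_hy^h-R^h)(t)\big];
\]
the outer terms are $O(h)$ in $L^2\subset H^{-1}$ by Proposition~\ref{thm:Rh}\,ii), while for the first column of the middle term one uses $\|\partial_{x_1}y^h(t+s)-\partial_{x_1}y^h(t)\|_{H^{-1}}\le\|y^h(t+s)-y^h(t)\|_{L^2}\le Ch|s|$ from \eqref{eq:bound-time-der}, and for the skew-$(3,2)$ part one invokes \eqref{eq:no-blowup} in the same way. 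This is exactly the mechanism your torsional remark hints at, but it must be applied to $A^h$ itself, not routed through $v_k^h$. The strong part of (f) then follows from (d2) for the first column via the identity $\operatorname{sym}((R^h-Id)/h^2)=-\tfrac12(A^h)^TA^h$, not from a separate Aubin--Lions argument.
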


\begin{proof} We will divide the proof in several steps.

\textit{\underline{Step 1}: convergence of the deformations $y^h$.} As a result of Proposition \ref{thm:Rh} we have that 
\begin{equation*}
    D_h y^h \rightarrow Id \text { strongly in } L^{\infty}(0,T ; L^2(\Omega; \mathbb{R}^{3 \times 3})).
\end{equation*}
This implies $\partial_k y^h \rightarrow 0$ for $k=2,\,3$ and so we get $ D y^h \rightarrow {e_1 \otimes e_1}$ strongly in $L^{\infty}(0,T ; L^2(\Omega; \mathbb{R}^{3 \times 3}))$. Taking into account the boundary conditions \eqref{eq:Dir-cond} and using the Poincaré inequality, we get $  y^h \rightarrow (x_1,0,0)^T$ strongly in $L^{\infty}(0,T ; H^1(\Omega; \mathbb{R}^{3 \times 3}))$.

\textit{\underline{Step 2}: proof of d1) , d2), (e), and (f).} By point iii) of Proposition \ref{thm:Rh} we get that the sequence $A^h$ is bounded in $L^{\infty}(0,T ; H^1(0,L ; \mathbb{R}^{3 \times 3}))$ and so there exists $A$ such that, up to subsequences,
\begin{equation}\label{EQweakconveAh}
    A^h \overset{\ast}\rightharpoonup A \text { weakly* in } L^{\infty}(0,T ; H^1(0,L ; \mathbb{R}^{3 \times 3})).
\end{equation}
Moreover, using point iv) of Proposition \ref{thm:Rh} we can prove that $A$ vanishes in $x_1=0$ and $x_1=L$, namely $A \in L^\infty(0,T;H^1_0(0,L;\Rdd))$. In fact, \eqref{EQweakconveAh} implies that
\begin{equation*}
    \int^{T'}_0 A^h(t, \cdot) \, \dt \rightharpoonup \int^{T'}_0 A(t, \cdot) \, \dt \text { weakly in }  H^1(0,L ; \mathbb{R}^{3 \times 3}),
\end{equation*}
for every $T' \in (0,T)$. Then, we get
\begin{equation*}
    \int^{T'}_0 A^h(t, \cdot) \, \dt \to \int^{T'}_0 A(t, \cdot) \, \dt  \text { uniformly on }  [0,L],
\end{equation*}
and using point {iv)} of Proposition \ref{thm:Rh}, we obtain $\int^{T'}_0 {A}(t, 0) \, \dt= \int^{T'}_0 {A}(t, L) \, \dt=0$. Since $T' \in (0,T)$ is arbitrary, we conclude that 
\begin{equation*}
    {A}(t, 0) = {A}(t, L) = 0 \text { for a.e. }  t \in [0,T],
\end{equation*}
and with this, we have shown that $A$ belongs to $L^\infty(0,T;H^1_0(0,L;\Rdd))$. Taking \eqref{EQweakconveAh} into account, this proves point d1). Furthermore, we observe that $A$ is antisymmetric, a fact that will be used repeatedly later in the proof. In fact, it is straightforward to verify that
\begin{equation}\label{EQsymmid}
    \operatorname{sym}\frac{A^h}{h}=\operatorname{sym} \frac{R^h-\mathrm{Id}}{h^2}=-\frac{\left(A^h\right)^T A^h}{2}
\end{equation}
and taking into account that $A^h$ is bounded in $L^{\infty}(0,T ; L^\infty(0,L ; \mathbb{R}^{3 \times 3}))$ we get that $\operatorname{sym}(R^h-\mathrm{Id})/{h^2}=\operatorname{sym}A^h/h$ is bounded in $L^{\infty}((0,T)\times  (0,L) ; \mathbb{R}^{3 \times 3})$ and so
\begin{equation}\label{EQsymmAhtozero}
    \operatorname{sym}A^h \to 0 \quad \textnormal{strongly }  L^{\infty}((0,T)\times  (0,L) ; \mathbb{R}^{3 \times 3})
\end{equation}
and so $A$ is skew-symmetric. 

We claim now that $A^h_{ij}$ is strongly compact in $L^p(0,T; L^\infty(0,L))$ for all $1 \leq p < \infty$ and $1\le i,j \le 3$ (and $\{i,j\} \ne \{2,3\}$ in case \eqref{eq:no-blowup} is not assumed). In view of \eqref{EQweakconveAh} this equivalent to 
\begin{equation}\label{EQstrongconveAhhh}
     A^h_{ij} \to A_{ij} \quad \text { strongly in } L^{p}(0,T ; L^\infty(0,L)).
\end{equation}
To verify this, by \eqref{EQsymmAhtozero} it suffices to establish strong convergence of $A^h_{21}$, $A^h_{31}$ and, in case \eqref{eq:no-blowup} is assumed, of $A^h_{32} - A^h_{23}$.  

Since $A^h_{ij}$ is uniformly bounded in $L^\infty(0,T; H^1(0,L; \mathbb{R}^3))$ we may apply Theorem \ref{thm:relatcompLp} with $X = H^1(0,L)$, $B = L^{\infty}(0,L)$, and $Y = H^{-1}(0,L)$, provided that condition \eqref{eq:compact-cond} is satisfied.

Let $0 < t_1 < t_2 < T$ and $h_j \to 0$ be a vanishing sequence. We have to verify
\begin{equation}\label{EQAh}
    \lim_{s \to 0} \sup_j \int_{t_1}^{t_2} \| A^{h_j}_{i1}(t+s) - A^{h_j}_{i1}(t) \|_{H^{-1}(0,L)} \, \dt = 0
\end{equation} 
for $i=2,3$ and that moreover \eqref{eq:no-blowup} implies
\begin{equation}\label{EQlimit111}
    \lim_{s \to 0}\sup_{j}\int^{t_2}_{t_1} \| (\operatorname{skew}(A^{h_j}(t+s)))_{3,2} - (\operatorname{skew}(A^{h_j}(t)))_{3,2} \|_{H^{-1}(0,L)} \, \dt  =0.
\end{equation}
We begin by noting that, for almost every $t \in (t_1, t_2)$ and $h > 0$ we can decompose  
\begin{equation}\label{eq:A-decompose}
\begin{aligned}
  A^h(t+s) - A^h(t) 
  &= \frac{1}{h} \big( R^h(t+s) - D_h y^h(t+s) \big) \\
  &\quad + \frac{1}{h} \big( D_h y^h(t+s) - D_h y^h(t) \big) 
   + \frac{1}{h} \big( D_h y^h(t) - R^h(t) \big).
\end{aligned}
\end{equation}
Thanks to estimate ii) in Proposition~\ref{thm:Rh}, there exists a constant $C > 0$ such that, for almost every $\tau \in (0,T)$,
\begin{equation}\label{eq:H-minus-one}
\frac{1}{h} \| R^h(\tau) - D_h y^h(\tau) \|_{H^{-1}(\Omega)} 
\leq \frac{1}{h} \| R^h(\tau) - D_h y^h(\tau) \|_{L^2(\Omega)} 
\leq C\, h.
\end{equation}
For the first column we also have the estimate 
\[
\begin{aligned}
\frac{1}{h} \| \partial_1 y^h(t+s) - \partial_1 y^h(t) \|_{H^{-1}(\Omega)} 
&\leq \frac{1}{h} \| y^h(t+s) - y^h(t) \|_{L^2(\Omega)} \\
&\leq \frac{1}{h} \int_t^{t+s} \| \partial_t y^h(\tau) \|_{L^2(\Omega)} \, d\tau 
\leq C\, |s|,
\end{aligned}
\]
where we used \eqref{eq:bound-time-der}. Combining this with \eqref{eq:A-decompose} and \eqref{eq:H-minus-one} we get 
\begin{equation}\label{EQAhbound}
    \int^{t_2}_{t_1}\| A^h_{i1}(t+s) - A^h_{i1}(t) \|_{H^{-1}(0,L)} \, \dt  \leq C_{t_1,t_2} (2h + |s|)
\end{equation}
for $i=1,2,3$. On the other hand, if \eqref{eq:no-blowup} is true, we also have
\[
\begin{aligned}
&\frac{1}{h} \| \operatorname{skew}(D_h {y}^h(t+s))_{32} - \operatorname{skew}(D_h {y}^h(t))_{32} \|_{H^{-1}(\Omega)} \\
&\qquad \leq \frac{1}{h} \int_t^{t+s} \| \partial_t \operatorname{skew}(D_h {y}^h(\tau))_{32} \|_{H^{-1}(\Omega)} \, d\tau 
\leq C\, |s|.
\end{aligned}
\]
Combining this with \eqref{eq:A-decompose} and \eqref{eq:H-minus-one} we then get 
\begin{equation}\label{eqjfvf}
    \int^{t_2}_{t_1}\| \operatorname{skew}(A^h(t+s))_{32} - \operatorname{skew}(A^h(t))_{32} \|_{H^{-1}(0,L)} \, \dt  \leq C_{t_1,t_2} (2h + |s|).
\end{equation}

Let $h_j \to 0$ a vanishing sequence, and let $\varepsilon >0$ arbitrary. Then, using that $\{h_j\colon h_j \geq \varepsilon \}$ is a finite set and the continuity of translations in Bochner spaces, we have
\begin{equation}
    \sup_{h_j \geq \varepsilon }\int^{t_2}_{t_1}\| A^{h_j}(t+s)  - A^{h_j}(t) \|_{H^{-1}(0,L)} \, \dt \to 0 
\end{equation}
as $s \to 0$. So \eqref{EQAhbound} and \eqref{eqjfvf} yield 
\begin{equation}
    \limsup_{s \to 0}\sup_{ h_j < \varepsilon }\int^{t_2}_{t_1}\| A^{h_j}_{i1}(t+s) - A^{h_j}_{i1}(t) \|_{H^{-1}(0,L)} \, \dt \leq  2C_{t_1, t_2}\varepsilon
\end{equation}
for $i=1,2,3$ and
\begin{equation}
    \limsup_{s \to 0}\sup_{ h_j < \varepsilon }\int^{t_2}_{t_1}\| \operatorname{skew}A^{h_j}(t + s))_{32} - \operatorname{skew}A^{h_j}(t))_{32} \|_{H^{-1}(0,L)} \, \dt \leq  2C_{t_1, t_2}\varepsilon
\end{equation}
respectively. 
Since $\varepsilon$ is arbitrary we have established \eqref{EQAh} and \eqref{EQlimit111}, which concludes point d2).

We conclude Step 2 by establishing point (e) and (f). The validity of point (e) is a straightforward consequence of d1) and condition ii) of Proposition \ref{thm:Rh}. Moreover, using condition \eqref{EQsymmid}, point d2), and the fact that $A$ is skew-symmetric, we get 
\begin{equation*}
    \left(\operatorname{sym} \frac{R^h-\mathrm{Id}}{h^2}\right)_{11}=-\frac{A^h e_1 \cdot A^h e_1}{2} \to -\frac{Ae_1 \cdot A e_1}{2}= \frac{(A^2)_{11}}{2}
\end{equation*}
strongly in $L^{p}(0,T ; L^{\infty}(0,L ; \mathbb{R}))$. Since the boundedness of $\operatorname{sym} \frac{R^h - \mathrm{Id}}{h^2}$ in the space $L^\infty(0,T; L^\infty(0,L;\Rdd))$ is discussed immediately after \eqref{EQsymmid}, this concludes the proof of (f).

\textit{\underline{Step 3}: proof of (a), (b), and (c).} We start by proving point (a). We can write the derivative of $u^h$ as
\begin{equation}
    \partial_{x_1} u^h=\frac{1}{h^2} \int_S ( D_h y^h - R^h )_{11} \, \dx' + \frac{1}{h^2}(\operatorname{sym} R^h - Id)_{11}.
\end{equation}
Taking into account point ii) of Proposition \ref{thm:Rh} and \eqref{EQsymmid}, we get that $\partial_{x_1} u^h$ is uniformly bounded in ${L^\infty(0,T;L^2(0,L))}$. This, together with the Poincaré inequality, yields the bound $\| u^h\|_{L^\infty(0,T;H^1_0(0,L))} \leq C$, which implies (a). 

We now proceed to prove point (b). We start by observing that for $k=2,\,3$
\begin{equation}
    \partial_{x_1}v^h_k(t, x_1)= \frac{1}{h} \int_S \partial_{x_1} y^h_k(t, x) \,\dx'= \frac{1}{h} \int_S (D_h y^h(t, x)-Id)_{k 1}\,\dx',
\end{equation}
so using {conditions ii) and iii) of Proposition \ref{thm:Rh}} we get that ${\partial_{x_1}}v^h_k$ is uniformly bounded in $L^\infty(0,T;L^2(0,L))$. Using the Poincaré inequality, we find that $v^h_k$ is uniformly bounded in $L^\infty(0,T;H^1_0(0,L))$. {With the help of \eqref{eq:bound-time-der} we also get that, for $k=2,\,3$, $\partial_t v^h_k$ is equibounded in $L^\infty(0,T;L^2(0,L))$. From these observations b1) and b2) readily follow. Since $L^\infty(0,T;H^1_0(0,L)) \cap W^{1,\infty}(0,T; L^2(0,L))$ embeds compactly into $L^\infty(0,T;H^1_0(0,L))$, see \cite[Theorem 3]{Simon}, we get point b2)}. 

We now proceed to prove point (c). The second and third columns of (e) yield the convergence
\begin{equation}\label{convfortecolonne23}
    \frac{1}{h^{2}} \partial_{x_k}({y}^h - {id}^h) \overset{\ast}{\rightharpoonup} A e_k \quad \textnormal{weakly* in } L^\infty(0,T; L^2(\Omega;\Rdd)),
\end{equation}
for $k = 2,\,3$, where $id^h\colon x \mapsto x^h$ and $x^h:=(x_1, h x')^T$.
Applying Poincaré's inequality on the cross-section $S$ to the function $z^h:={y}^h - {id}^h$, we obtain
\[
\|z^h(t,x_1) - \langle z^h(t,x_1) \rangle_S\|_{L^2(S)}^2 
\leq C\left(\|\partial_{x_2}(z^h(t,x_1))\|_{L^2(S)}^2 
+ \|\partial_{x_3}(z^h(t,x_1))\|_{L^2(S)}^2\right)
\]
for a.e. $t \in (0,T)$ and $x_1 \in (0, L)$. It follows that the sequence
\[
\tilde{z}^h:=\frac{1}{h^{2}}(z^h - \langle z^h \rangle_S)
\]
is bounded in $L^\infty(0,T; L^2(\Omega; \Rd))$. Then, there exists a function $\tilde{z}$ such that, up to subsequences, $\tilde{z}^h \overset{\ast}{\rightharpoonup} \tilde{z}$ weakly* in $L^\infty(0,T; L^2(\Omega;\Rd))$. Taking into account \eqref{convfortecolonne23}, we get for $k=2,3$, $\partial_{x_k}\tilde{z} = Ae_k \in L^\infty(0,T; L^2(\Omega; \Rd))$. This implies that {$q := \tilde{z} - x_2 A e_2 - x_3 A e_3$} has $\partial_{x_k} q = 0$, $k=2,3$, and so $q \in L^\infty(0,T; L^2({(0,L)}; \Rd))$ {satisfies}
\begin{equation}\label{eqconvformaquadratica}
    \tilde{z}^h=\frac{1}{h^{2}}(z^h - \langle z^h \rangle_S) \overset{\ast}{\rightharpoonup} x_2 A e_2 + x_3 A e_3 + q
\end{equation}
weakly* in $L^\infty(0,T; L^2(\Omega;\Rd))$. We are now in a position to prove the convergence of $w^h$. Using \eqref{eq:centroids} we get 
\[
\begin{aligned}
w^h(t, x_1) &= \frac{1}{h^{2}} \int_S \frac{x_2 {y}_3^h(t, x) - x_3 {y}_2^h(t, x)}{\mu(S)} \, \dx' \\
&= \frac{1}{h^{2} \mu(S)} \int_S ( {y}^h - x^h - \langle {y}^h - id^h \rangle_S ) 
\cdot (x_2 e_3 - x_3 e_2) \, \dx'.
\end{aligned}
\]
Using \eqref{eqconvformaquadratica}, \eqref{eq:centroids}, and the fact that $A$ is skew-symmetric, we obtain
\begin{equation}\label{convwhtow}
    w^h \overset{\ast}{\rightharpoonup} \frac{1}{\mu(S)} \int_S \left( x_2 A e_2 + x_3 A e_3 \right) \cdot (x_2 e_3 - x_3 e_2) \, \dx' = A_{3,2}
\end{equation}
weakly* in $L^\infty(0,T; L^2(0,L;\Rd))$. We define $w:=A_{3,2}$ and we investigate the convergence of the spatial derivative of $w^h$:
\begin{equation*}
    \begin{aligned}
        \partial_{x_1}w^h&= \frac{1}{h^{2} \mu(S)} \int_S \partial_{x_1}( {y}^h - x^h) 
\cdot (x_2 e_3 - x_3 e_2) \, \dx'\\
&= \frac{1}{h^{2} \mu(S)} \int_S ( \partial_{x_1}{y}^h - R^he_1) 
\cdot (x_2 e_3 - x_3 e_2) \, \dx'\\
&\quad+ \frac{1}{h^{2} \mu(S)} \int_S ( R^he_1 - \partial_{x_1}x^h) 
\cdot (x_2 e_3 - x_3 e_2) \, \dx'.\\
    \end{aligned}
\end{equation*}
The first term {on the right hand side} of the last equation is controlled by point ii) of Proposition \eqref{thm:Rh}, while the second is zero by \eqref{eq:centroids}. We find that $\partial_{x_1}w^h$ is bounded in $L^\infty(0,T; L^2(0,L;\Rd))$, and taking into account \eqref{convwhtow} we get point (c).

\textit{\underline{Step 4}: proof of d3).} Taking into account conditions b1) and (e), it is straightforward to verify that $\partial_{x_1}v_k=A_{k1}$, for $k=2,\,3$. Moreover, we have already proved in Step 3 that $w=A_{32}$. Since $A$ is skew-symmetric, we deduce \eqref{representationA}, proving point d3). In particular, this implies, for $k=2,\,3$, $v_k\in L^\infty(0,T; H^2_0(0,L))$, completing the regularity of the displacements $v_k$.

\textit{\underline{Step 5}: proof of (g).} From the definition of $G^h$ and ii) of Proposition \ref{thm:Rh}, we obtain
\begin{equation*}
    \|G^h(t)\|_{L^2(\Omega;\Rdd)} \leq \| (R^h(t))^T \|_{L^\infty(0,L;\Rdd)} \left\| \frac{D_hy^h(t)- {R^h(t)}}{h^2} \right\|_{L^2(\Omega;\Rdd)} \leq C
\end{equation*}
for a.e. $t \in (0,T)$. Therefore, $G^h$ is bounded in $L^\infty(0,T; L^2(0,L; \Rdd))$, and it follows that there exists $G$ such that $G^h \overset{\ast}{\rightharpoonup} G$ weakly* in $L^\infty(0,T; L^2(\Omega;\Rdd))$. We now proceed to characterize the symmetric part of $G$. Since $R^h \to {Id}$ uniformly, we deduce that $R^h G^h \overset{\ast}{\rightharpoonup} G$ weakly* in $L^\infty(0,T; L^2(\Omega;\Rdd))$. In particular, for $k = 2, 3$, we have  
\begin{equation}\label{eqconvRhGh}
R^h G^h e_k = \frac{1}{h^3} ( \partial_k {y^h} - h R^h e_k ) \overset{\ast}{\rightharpoonup} G e_k
\end{equation}  
weakly* in $L^\infty(0,T; L^2(\Omega;\Rd))$. We now define the map $\tilde{\beta}^h \colon (0,T)\times\Omega \to \mathbb{R}^3$ as  
\begin{equation*}
\tilde\beta^h(t,x) := \frac{1}{h^3} ( y^h(t,x) - h x_2 R^h(t,x_1) e_2 - h x_3 R^h(t,x_1) e_3 ).
\end{equation*}
It is straightforward to verify that, for $k = 2, 3$, one has
\begin{equation}\label{partialxkbetatilde}
    \partial_{x_k} {\tilde\beta^h} = R^h G^h e_k,
\end{equation}
which, combined with~\eqref{eqconvRhGh}, yields a uniform bound for $\partial_{x_2} {\tilde\beta^h}$ and $\partial_{x_3} {\tilde\beta^h}$ in the space $L^\infty(0, T; L^2(\Omega; \mathbb{R}^3))$. Therefore, we can apply the Poincaré inequality $S$ to get a uniform bound for ${ \beta^h:=\tilde\beta^h}-\langle \tilde\beta^h\rangle_S $ in $L^\infty(0, T; L^2(\Omega; \mathbb{R}^3))$, where $\langle \tilde\beta^h\rangle_S:=\int_{S}\tilde\beta^h \, \dx' $. Hence, there exists a function $\beta$ such that, up to subsequences,  
\begin{equation}\label{betahconv}
\beta^h = {\tilde\beta^h} - \langle { \tilde\beta^h} \rangle_S \overset{\ast}{\rightharpoonup} \beta \quad \text{weakly* in } L^\infty(0, T; L^2(\Omega; \mathbb{R}^3)).
\end{equation}
By passing to the limit in \eqref{partialxkbetatilde} using \eqref{eqconvRhGh}, we obtain
\begin{equation*}
\partial_{x_k} \beta = G e_k \quad \text{for } k = 2, 3.
\end{equation*}
We now aim to derive a representation for the first column of $G$. To this end, we again make use of the definition of $\beta^h$ and the equation~\eqref{eq:centroids} to write:
\begin{align}\label{eqRhGhe1expand}
    R^hG^he_1&=\frac{1}{h^2}(\partial_{x_1} y^h- R^he_1)\nonumber\\
    &=h \partial_{x_1}\beta^h + \frac{1}{h} \partial_{x_1}R^h(x_2e_2+ x_3e_3)+\frac{1}{h^2} \int_S (\partial_{x_1} y^h- R^he_1) \,\dx'.
\end{align}
By passing to the limit in the first term on the right-hand side of \eqref{eqRhGhe1expand} using \eqref{betahconv}, we obtain
\begin{equation}\label{eqhbhconv}
    h \partial_{x_1}\beta^h \overset{\ast}{\rightharpoonup} 0 \quad \text{weakly* in } L^\infty(0, T; {H^{-1}}(\Omega; \mathbb{R}^3)).
\end{equation}
For the second term, we can apply point d1) to get
\begin{equation}\label{eqintterzterm}
    \frac{1}{h} \partial_{x_1}R^h(x_2e_2+ x_3e_3) \overset{\ast}{\rightharpoonup} \partial_{x_1}A(0,x')^T \quad \text{weakly* in } L^\infty(0, T; L^2(\Omega; \mathbb{R}^3)),
\end{equation}
while for the last term we use point ii) of {Proposition}~\ref{thm:Rh} to claim the existence of a function $\eta$ such that, up to subsequence,
\begin{equation}\label{eqinconvdjdjdjdj}
    \frac{1}{h^2} \int_S (\partial_{x_1} y^h- R^he_1) \,\dx' \overset{\ast}{\rightharpoonup} \eta \quad \text{weakly* in } L^\infty(0, T; L^2(0,L; \mathbb{R}^3)).
\end{equation}
We use {$R^h G^h \overset{\ast}{\rightharpoonup} G$ weakly* in $L^\infty(0,T; L^2(\Omega;\Rdd))$}, \eqref{eqhbhconv}, \eqref{eqintterzterm}, and \eqref{eqinconvdjdjdjdj}, to pass to the limit for $h \to 0$ in \eqref{eqRhGhe1expand} and we get
\begin{equation*}
    Ge_1=\partial_{x_1} A \begin{pmatrix} 0\\
    x'
\end{pmatrix}
+ \eta.
\end{equation*}
Then,  
\[
{\tilde{G}} = \operatorname{sym} G = \operatorname{sym} \left( \partial_{x_1} A \begin{pmatrix}
0 \\
x_2 \\
x_3
\end{pmatrix} + \eta_1 e_1 \,\Big\lvert\, \partial_{x_2}\varphi \,\Big\lvert\, \partial_{x_3}\varphi \right),
\]
where we define $\varphi := \beta + (x_2 \eta_2 + x_3 \eta_3) e_1$. From the definition of $u^h$, we obtain:
\[
\frac{1}{h^2} \int_S \partial_{x_1}{y}_{1}^h - R_{11}^h \, \dx' = \partial_{x_1}u^h - \frac{1}{h^2} \int_S (R^h - \mathrm{Id})_{11} \, \dx'.
\]
Using point (f) and equation \eqref{eqinconvdjdjdjdj}, we get
\[
{\eta_1} = \partial_{x_1} u - \frac{1}{2}(A^2)_{11} = \partial_{x_1} u + \frac{1}{2}(\partial_{x_1} v_{2}^2 + \partial_{x_1} v_{3}^2).
\]
Hence, the limiting strain tensor $\tilde{G}$ takes the form
\[
\tilde{G} = \operatorname{sym} \left( \partial_{x_1} A \begin{pmatrix}
0 \\
x_2 \\
x_3
\end{pmatrix} + \left( \partial_{x_1} u + \frac{1}{2}(\partial_{x_1} v_{2}^2 + \partial_{x_1} v_{3}^2) \right) e_1 \,\Big\lvert\, \partial_{x_2}\varphi \,\Big\lvert\, \partial_{x_3}\varphi \right).
\]
With this, the proof of the theorem is complete.
\end{proof}

\section{Proof of the main theorem}
This section is devoted to the proof of Theorem \ref{MAINTHEOREM}. We begin by introducing some quantities that will be used to pass to the limit in the equation.  
We define the rescaled stress \( E^h \colon (0,T) \times \Omega \to \mathbb{R}^{3 \times 3} \) as 
\[
E^h(t,x):=\frac{1}{h^2}DW(Id + h^2 G^h(t,x)),
\]  
and its moments \( {E^{h,0},\, E^{h,2},\, E^{h,3} } \colon (0,T) \times (0,L) \to \mathbb{R}^{3 \times 3} \) as 
\begin{equation*}
    E^{h,0}\left(t,x_1\right):=\int_S E^{h}(t,x) \,\dx^\prime, \quad E^{h,{ k}}\left(t,x_1\right):=\int_S  E^{h}(t,x) x_{ k}\,\dx^\prime \quad{(k=2,3)}
\end{equation*}
$\text{for a.e. }t \in (0,T)\,\,\,\text{for a.e. }x \in \Omega$.
We recall the definition of the limiting stress $E$ from \eqref{eq:E-def}, depending on the quantities $\theta$, defined in terms of $u,v_2,v_3$ and $A$ in \eqref{eq:theta-def}, and $\alpha \in L^\infty (0,T; L^2(0,L; H^1(S;\Rd)))$, such that $\alpha(t)$ is the unique minimizer of \eqref{eq:alpha-def} with $r=\partial_{x_1}u + \frac{1}{2} ( \partial_{x_1}v^2_{2} + \partial_{x_1}v^2_{3} )$ and $F=\partial_{x_1}A$ on $\mathcal{V}$ defined in \eqref{defmathcalV}. Existence and uniqueness of the minimizer are ensured by \cite[Remark 4.1]{MM-Gamma-VK}. Moreover, in the proof of the main result, we will make use of the following lemma, whose proof can be found in \cite[Lemma 2.1]{MoraMueller:03} or \cite[Lemma 4.0.1]{Ameismeier}.

\begin{lemma}\label{lemmacriticalpoint}
Let $r \in \mathbb{R}$ and $F \in \mathbb{R}^{3 \times 3}$ be a skew-symmetric matrix. Define the functional $G_{r,F} : H^1(S; \mathbb{R}^3) \to [0, \infty)$ as
\[
 \quad
G_{r,F}(\varphi) := \int_S Q \left( r e_1 + F \begin{pmatrix} 0 \\ x' \end{pmatrix} \Big| \partial_{x_2} \varphi \Big| \partial_{x_3} \varphi \right) \dx'.
\]
Then the function $\tilde \alpha \in \mathcal{V}$ is the unique minimizer of $G_{r,F}$ if and only if
\[
{\tilde{E}} := \mathcal{L} \left( r e_1 + F  \begin{pmatrix} 0 \\ x' \end{pmatrix} \Big| \partial_2 \tilde \alpha \Big| \partial_3 \tilde\alpha \right)
\]
solves the boundary value problem
\[
\begin{cases}
\operatorname{div}_{x'} \big( {\tilde{E}} e_2 \mid { \tilde{E}} e_3 \big) = 0 & \text{in } S, \\
\big( { \tilde{E}} e_2 \mid { \tilde{E}} e_3 \big) \nu = 0 & \text{on } \partial S,
\end{cases}
\]
in the weak sense, where $\nu$ denotes the outer normal vector to $\partial S$. Moreover, $\tilde\alpha$ depends linearly on $r$ and $F$.
\end{lemma}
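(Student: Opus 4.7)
My plan is to treat this as a routine Euler--Lagrange characterization of the minimizer of a convex quadratic functional on a closed linear subspace of a Hilbert space. Two algebraic facts drive everything: (i) by frame indifference, $\mathcal L$ takes values in symmetric matrices and satisfies $\mathcal L M=\mathcal L(\operatorname{sym} M)$, so $Q(M)=Q(\operatorname{sym}M)$; and (ii) the two linear constraints defining $\mathcal V$ are exactly transverse to the kernel of the semi-definite Hessian $\psi\mapsto \int_S Q(0\mid \partial_{x_2}\psi\mid \partial_{x_3}\psi)\,\dx'$ on $H^1(S;\R^3)$, which, by a standard two-dimensional Killing argument, consists of functions of the form $c+\lambda(0,-x_3,x_2)^T$ with $c\in\R^3$ and $\lambda\in\R$.

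For the forward direction, I assume $\tilde\alpha\in\mathcal V$ minimizes $G_{r,F}$. Since the functional is quadratic, for every $\varphi\in\mathcal V$ the first variation reads
\begin{equation*}
0=\frac{d}{dt}\bigg|_{t=0} G_{r,F}(\tilde\alpha+t\varphi) = 2\int_S \tilde E:(0\mid \partial_{x_2}\varphi\mid \partial_{x_3}\varphi)\,\dx'.
\end{equation*}
To extend the identity from $\mathcal V$ to arbitrary $\varphi\in H^1(S;\R^3)$, so that no Lagrange multipliers survive in the resulting PDE, I decompose $\varphi=\varphi_0+c+\lambda(0,-x_3,x_2)^T$ with $\varphi_0\in\mathcal V$. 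The constant $c$ contributes zero derivatives, while $(0,-x_3,x_2)^T$ contributes the skew matrix $\lambda(0\mid e_3\mid -e_2)$, which vanishes when contracted with the symmetric tensor $\tilde E$. Hence the Euler--Lagrange identity holds for all $\varphi\in H^1(S;\R^3)$, and standard integration by parts recasts it as the stated weak BVP: the interior relation $\dive_{x'}(\tilde E e_2\mid \tilde E e_3)=0$ in $S$ together with Neumann $(\tilde E e_2\mid \tilde E e_3)\nu=0$ on $\partial S$.

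Conversely, if $\tilde\alpha\in\mathcal V$ gives $\tilde E$ solving the BVP weakly, the same integration by parts produces the Euler--Lagrange identity, whence for any $\psi\in\mathcal V$
\begin{equation*}
G_{r,F}(\psi)-G_{r,F}(\tilde\alpha) = \int_S Q\bigl(0\mid \partial_{x_2}(\psi-\tilde\alpha)\mid \partial_{x_3}(\psi-\tilde\alpha)\bigr)\,\dx'\ge 0,
\end{equation*}
so $\tilde\alpha$ is a minimizer. Uniqueness reduces to positive-definiteness of the Hessian on $\mathcal V$: if the right-hand side vanishes, then the symmetric part of $(0\mid \partial_{x_2}\psi\mid \partial_{x_3}\psi)$ vanishes pointwise, forcing $\psi_1$ constant and $(\psi_2,\psi_3)^T$ to be a planar rigid motion, which the constraints defining $\mathcal V$ together with \eqref{eq:centroids} pin to zero. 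Linear dependence on $(r,F)$ is then immediate, since the Euler--Lagrange relation is jointly linear in $(r,F,\tilde\alpha)$, so uniqueness forces the solution map $(r,F)\mapsto\tilde\alpha$ to be linear. The only nontrivial point is exactly the interplay between the constraints and the symmetry of $\mathcal L$: it is precisely this symmetry that guarantees the constrained variational problem produces the stated unconstrained PDE, with no stray multipliers coming from the definition of $\mathcal V$.
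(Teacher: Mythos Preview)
Your argument is correct and is precisely the standard Euler--Lagrange characterization one expects here: the first variation over $\mathcal V$ extends to all of $H^1(S;\R^3)$ because the complementary directions $c+\lambda(0,-x_3,x_2)^T$ produce either zero derivatives or a skew matrix annihilated by the symmetric $\tilde E$, and strict convexity on $\mathcal V$ follows from the two-dimensional Killing characterization together with \eqref{eq:centroids}. The paper itself does not prove this lemma but cites \cite[Lemma~2.1]{MoraMueller:03} and \cite[Lemma~4.0.1]{Ameismeier}; your write-up is exactly the argument one finds in those references, so there is nothing to compare.
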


Taking into account \cite[Remark 4.2]{MM-Gamma-VK}, we have that the minimizer depends linearly on the parameters \( r \in \mathbb{R} \) and \( F \in \mathbb{R}^{3 \times 3} \). It follows that for
\[
r = \partial_{x_1} u + \tfrac{1}{2} \left( (\partial_{x_1} v_2)^2 + (\partial_{x_1} v_3)^2 \right) \quad \text{and} \quad F = \partial_{x_1} A,
\]
the function \( \alpha \) belongs to the space
\[
\alpha \in L^\infty \big(0,T; L^2(0,L; H^1(S; \mathbb{R}^3)) \big).
\]

We also recall the definition of moments of the stress tensor as 
\begin{equation*}
    E^0(t,x_1):=\int_S E(t,x) \,\dx^\prime, \quad \quad E^{k}(t,x_1):=\int_S E(t,x) x_{k} \,\dx^\prime \quad{(k=2,3)}
\end{equation*}
for a.e. $t \in (0,T)$ and $x_1 \in (0,L)$. 

\begin{proof}[Proof of Theorem \ref{MAINTHEOREM}]
The points (i)-(iv) of Theorem~\ref{MAINTHEOREM} have already been established in Theorem~\ref{thm:compact}. It remains to prove items {(v)-(vii)}, to verify that $u$, $v_2$, $v_3$, and $w$ are solutions to the problem \eqref{eqlimitingeqFORTE1}-\eqref{datiiniz1}, and the regularity of $\partial_{tt}^2 v_k$. We note that the boundary conditions \eqref{dirichletlimite1} and \eqref{dirichletlimite1.1} are fulfilled as \( u(t),\,  w(t) \in H^1_0(0,L) \) and $v_k(t) \in H^2_0(0,L)$ for almost every \( t \in (0,T) \).

Let us begin by rewriting the equation \eqref{eqyhforte} in a form more convenient for the next steps. Taking into account \eqref{Wframeindiff} we have 
\begin{equation}\label{eq:DW-hhRE}
    DW(D_hy^h)=R^hDW(Id+h^2G^h)=h^2R^hE^h,
\end{equation}
then we can write the weak formulation of \eqref{eqyhforte} as
\begin{equation}\label{eq:weak-h2RE}
    \int^T_0 \int_\Omega \partial_t y^h \cdot \partial_t \psi \,\dx\dt - \int^T_0 \int_\Omega R^h E^h : D_h\psi \,\dx\dt + h \int^T_0 \int_\Omega f \cdot \psi \,\dx\dt=0
\end{equation}
for every $\psi \in H^1_0(0,T; L^2(\Omega;\Rd))\cap L^2(0,T; H^1(\Omega;\Rd))$ such that $\psi(t)=0$ in $(\{0\}\times S) \cup (\{L\} \times S)$, for a.e. $t\in(0,T)$.

\textit{\underline{Step 1}: convergence of the rescaled stress $E^h$.} Taking into account \eqref{Wregularity}, \eqref{controllodifferenzialeDW}, and point (g) of Theorem \ref{thm:compact}, we can apply Lemma \ref{lemmadifferenziab} to the function $ A \mapsto DW(Id+A)$ and we get
\begin{equation}\label{step2-1}
    E^h=\frac{1}{h^2}DW(I+h^2G^h) \overset{\ast}{\rightharpoonup} \mathcal{L}G=:\tilde E
\end{equation}
weakly* in $L^\infty(0,T; L^2(\Omega;\Rdd))$, where we recall that $\mathcal{L}:=D^2W(Id)$. We claim that $\tilde E$ is symmetric and $\tilde{E}=\mathcal{L}\tilde{G}$, where $\tilde{G}$ is the symmetric part of $G$, the tensor defined in point (g) of Theorem \ref{thm:compact}. Indeed, we have $W(e^{tS})=0$, for every $t \in \mathbb{R}$ and $S \in \Rdd$ skew-symmetric. Differentiating with respect to $t$ and using the fact that $\mathcal{L}$ is positive semi-definite, we get $\mathcal{L}S=0$ and so $\mathcal{L}F=\mathcal{L}(\operatorname{sym}F)$, for any $F\in \Rdd$. Furthermore, for any $F,\,S\in \Rdd$ with $S$ skew-symmetric, we have $\mathcal{L}F:S=\mathcal{L}S:F=0$, so $\mathcal{L}F$ is symmetric. In particular, the claim is proved.

We want now to prove that $\tilde E=E$. To get that, let $T' \in (0,T)$ and let $\psi$ be a test function in \eqref{eq:weak-h2RE} such that $\psi(t)=0$ for a.e. $t \in (T',T)$. We multiply equation \eqref{eq:weak-h2RE} by $h$ and we pass to the limit, to get
\begin{equation}\label{eqsistemaE}
    \int_{\Omega} (\tilde{E}(t, x)e_2 \cdot \partial_{x_2} \psi(t,x) +  \tilde{E}(t, x)e_3 \cdot \partial_{x_3} \psi(t,x))\,\dx=0 \,\,\,\textnormal{ for a.e. } t \in (0,T),
\end{equation}
where we used the arbitrariness of $T'$. Hence $\tilde{E}$ satisfies for a.e. $t \in (0,T)$ and $x_1 \in(0, L)$ the weak form of the system
\begin{equation}\label{SYSTEMeTILDE23}
    \left\{\begin{aligned}
\operatorname{div}_{x^{\prime}}\left(\tilde{E} e_2 \mid \tilde{E} e_3\right)=0 & \text { in } S, \\
\left(\tilde{E} e_2 \mid \tilde{E} e_3\right) \nu_{\partial S}=0 & \text { in } \partial S.
\end{aligned}\right.
\end{equation}
We define $\gamma: (0,T)\times \Omega \rightarrow \mathbb{R}^3$ as
\begin{equation*}
\begin{aligned}
      \gamma(t,x)&:=\beta(t, x)-\frac{1}{\mu(S)}\int_S\beta(t,x_1,\xi_2,\xi_3) \cdot (0,-\xi_3,\xi_2)^T \, \textnormal{d}\xi_2 \textnormal{d}\xi_3 \, (0, -x_3, x_2)^T \\ 
      &\qquad -\int_S \beta(t,x_1,\xi_2,\xi_3) \,\textnormal{d}\xi_2 \, \textnormal{d}\xi_3,
\end{aligned}
\end{equation*}
where $\beta$ is the function in point (g) of Theorem \ref{thm:compact}. It is easy to verify that $\gamma \in L^\infty(0,T; L^2(0, L ; \mathcal{V}))$, where $\mathcal{V}$ is the space defined in \eqref{defmathcalV}. {As $(0 \mid \partial_{x_2} \gamma \mid \partial_{x_2} \gamma) \linebreak[2] - (0 \mid \partial_{x_2} \beta \mid \partial_{x_2} \beta)$ is skew-symmetric, we also have}
\begin{equation*}
    \tilde E =\mathcal{L}G=\mathcal{L}\tilde{G}=\mathcal{L}\left( 
\partial_{x_1} A\! \begin{pmatrix}
0 \\
x_2 \\
x_3
\end{pmatrix} \!+ \!
\left( \partial_{x_1} u\! +\! \frac{1}{2}(\partial_{x_1} v_2^2\! + \!\partial_{x_1} v_3^2) \right) e_1 
\,\Big\lvert\, \partial_{x_2} \gamma 
\,\Big\lvert\, \partial_{x_3} \gamma 
\right).
\end{equation*}
So using Lemma \ref{lemmacriticalpoint} we get $\tilde E=E$. Thus, combining this with \eqref{step2-1}, it follows that we have established
\begin{equation}\label{step2}
    E^h \overset{\ast}{\rightharpoonup} \mathcal{L}G= \tilde E= E\quad \textnormal{weakly* in $L^\infty(0,T; L^2(\Omega;\Rdd))$.}
\end{equation} 

\textit{\underline{Step 2}: convergence of the rotational stress matrices $B^h$.} From the identity $Id - (D_hy^h(t,x))^T + h(A^h(t,x_1))^T = (R^h(t,x_1))^T - (D_hy^h(t,x))^T$
and point ii) in Proposition~\ref{thm:Rh} we obtain
\begin{equation*}
    \left\| \frac{1}{h} \int_S (Id - (D_hy^h(t,x))^T)  \, \dx' +(A^h(t))^T \right\|_{L^2(0,L)}\leq C h,
\end{equation*}
while \eqref{eq:DW-hhRE}, \eqref{step2-1} and iii) in Proposition~\ref{thm:Rh} yield  
\begin{equation*}
    \left\| \frac{1}{h^2} \int_S DW(D_h y^h(t,x))  \, \dx' - E^{h,0}(t) \right\|_{L^2(0,L)}
    \le \left\| (R^h(t) - Id) E^h(t) \right\|_{L^2(\Omega)} \leq C h
\end{equation*}
for a.e.\ $t$. As $A^h$ and $E^h$ are bounded in $L^\infty(0,T;L^2(\Omega;\R^{3\times 3}))$, this shows that 
\begin{equation}\label{eq:Bh-AhEh0}
    \| B^h(t) - A^h(t) E^{h,0}(t) \|_{L^1(\Omega)}
    \leq C h, 
\end{equation}
where we have also used point (f) of Theorem~\ref{thm:compact}.
By (d) in Theorem~\ref{thm:compact} and \eqref{step2-1} we get
\begin{equation}\label{eq:AhEh-to-B}
    A^h E^{h,0} \overset{\ast}{\rightharpoonup} B \quad\text{weakly* in } L^{\infty}(0,T;L^2(\Omega;\Rdd)) 
\end{equation}
for some $B \in L^{\infty}(0,T;L^2(\Omega;\R^{3\times 3}))$. Together with \eqref{eq:Bh-AhEh0} this shows that $B^h$ converges weakly* in $L^{\infty}(0,T;L^2(\Omega;\Rdd))$ to $B$. Moreover, when \eqref{eq:no-blowup} is satisfied, we have that 
\begin{equation*}
    A^h E^{h,0} {\rightharpoonup} AE^0 \quad\text{weakly in } L^{2}(0,T;L^2(\Omega;\Rdd)), 
\end{equation*}
where we used point d2) in Theorem \ref{thm:compact} and \eqref{step2}. Combining this convergence with \eqref{eq:AhEh-to-B}, we get $B=AE^0$, which concludes the proof of point (v) of Theorem \ref{MAINTHEOREM}.

\textit{\underline{{Step 3}}: Derivation of the first equation in \eqref{eqlimitingeqFORTE1}.} Let us take a function $\psi \in H^1_0(0,T; L^2((0,L);\Rd))\cap L^2(0,T; H^1_0((0,L);\Rd))$ as a test function in \eqref{eq:weak-h2RE}, then

\begin{equation}
\int^T_0 \int_\Omega \left( \partial_t y^h \cdot \partial_t \psi -  R^{h} E^{h}:\partial_{x_1}\psi \otimes e_1+   h f \cdot \psi \right) \,\dx\dt=0.
\end{equation}
By integrating on $S$, we get
\begin{equation}\label{eq:3.15}
\!\int^T_0 \!\int_\Omega\!  \partial_t y^h \cdot \partial_t \psi \,\dx\dt -  \int^T_0\! \int^L_0 \!\left( R^{h} \, E^{h,0}:\partial_{x_1}\psi \otimes e_1-   h f \cdot \psi \right) \,\dx_1\dt=0,
\end{equation}
where we used that $R^{h}$, $f$, and $\psi$ depend only on $t$ and $x_1$ and that $\mathcal{L}^2(S)=1$. By Proposition \ref{thm:Rh}, we have that $R^{h} \rightarrow I d$ strongly in $L^\infty(0,T;H^1\left(0, L ; \mathbb{R}^{2\times 3}\right))$, while by \eqref{eq:bound-time-der} we have $\partial_ty^{h} \rightarrow 0$ strongly in $L^\infty(0,T;L^2(0, L;\Rd ))$. Moreover, using \eqref{step2} we have $E^{h,0} \overset{\ast}{\rightharpoonup} { E^{0}}$ weakly* in $L^\infty(0,T; L^2(0,L;\Rdd))$. Then, we can pass to limit as $h \to 0$ in equation \eqref{eq:3.15} to derive
\begin{equation}\label{eq:EQ1}
    \int^{T}_0 \int_0^L E^0: \partial_{x_1}\psi \otimes e_1 \,\dx_1\dt=0.
\end{equation}
Moreover, by \eqref{SYSTEMeTILDE23} and \eqref{step2}, we get
\begin{equation*}
    \int_S \left({E} e_2 \mid {E} e_3\right): D_{x'} \xi \, \dx'=0
\end{equation*}
for every $\xi \in H^1(S; \Rd)$, a.e. $t \in (0,T)$ and $x_1 \in(0, L)$. Choosing $\xi(x')=x_k e_i$, with $k=2,\,3$ and $i=1,\,2,\,3$, we get
$$
\left(E^0\left(t,x_1\right) e_2 \mid E^0\left(t,x_1\right) e_3\right)=0 \quad \text { for a.e. } t \in(0, T), \text { for a.e. } x_1 \in(0, L).
$$
Since $E$ is symmetric by Step 1, we deduce
\begin{equation}\label{raprrsentofE}
    E^0\left(t,x_1\right)=\left(\begin{array}{ccc}
E_{11}^0\left(t,x_1\right) & 0 & 0 \\
0 & 0 & 0 \\
0 & 0 & 0
\end{array}\right), 
\end{equation}
so \eqref{eq:EQ1} is the weak form of first equation of \eqref{eqlimitingeqFORTE1}.

\textit{\underline{{Step 4}}: Symmetry inequality for $E^{h}$}. We have that $W\left(e^{t S} F\right)=W(F)$ for every skew-symmetric $S$ and for every $F \in$ $\mathbb{R}^{3 \times 3}$ and $t \in \mathbb{R}$. Differentiating at $t=0$, we get $D W(F) F^T$ is symmetric for every $F \in \mathbb{R}^{3 \times 3}$. Let us define $F={Id}+h^2 G^h$, then we get
\begin{equation}\label{eqsymmequality}
    \begin{aligned}
E^{h}-(E^{h})^T & =\frac{1}{h^2} D W(I d+h^2 G^{h})-\frac{1}{h^2} D W(I d+h^2 G^{h})^T \\
& =-D W(I d+h^2 G^{h})(G^{h})^T+G^{h} D W(I d+h^2 G^{h})^T \\
& =-h^2(E^{h}(G^{h})^T-G^{h}(E^{h})^T).
\end{aligned}
\end{equation}
Taking into account Step 1 and point (g) of Theorem \ref{thm:compact}, we obtain that $E^h$ and $G^h$ are bounded in $L^\infty(0,T; L^2(\Omega,\Rdd))$. Consequently, we deduce from \eqref{eqsymmequality} the estimate
\begin{equation}\label{eq_Ehsymm}
    \|E^{h}(t)-(E^{h}(t))^T\|_{L^1(\Omega)} \leq C h^2 \quad \text{for a.e. } t \in (0,T).
\end{equation}

\textit{\underline{{Step 5}}: Derivation of the second and the third equation in \eqref{eqlimitingeqFORTE1} and regularity of $\partial_{tt}v_k$}. Let $k=2,\,3$ and $\psi(t,x):= \phi\left(t, x_1\right) e_k$ as test function \eqref{eq:weak-h2RE}, where we take $\phi \in H^1_0(0,T; L^2(0,L))\cap L^2(0,T; H^2_0(0,L))$. We get
\begin{equation}
\int^T_0 \int_\Omega \left( \partial_t y^h_k\, \partial_t \phi -  R^{h} E^{h}:(\partial_{x_1}\phi e_k) \otimes e_1+ h f_k \phi \right) \,\dx\dt=0.
\end{equation}
By \eqref{eq:Ah} we have $R^h E^h=h A^h E^h+E^h$. By substituting this identity into the previous equation, dividing by $h$, and integrating over $S$, we arrive at:
\begin{equation}\label{phiek}
 \int^T_0 \int^L_0 \left( \partial_t v^h_k \, \partial_t \phi -  \left(A^{h} \,E^{h,0} + \frac{E^{h,0}}{h}\right):(\partial_{x_1}\phi e_k) \otimes e_1 + f_k \phi \right) \,\dx_1 \dt=0,
\end{equation}
with $k=2,\,3$. 
By \eqref{eq:AhEh-to-B}, \eqref{eq:rho-sigma-def}, and \eqref{raprrsentofE} we deduce
\begin{equation}\label{convergenceAhEhk1}
    (A^{h}\,E^{h,0}):( e_k\otimes e_1 ) 
    \overset{\ast}{\rightharpoonup} B_{k1} 
    = A_{k1} \, E^{0}_{11} + \rho_k, 
\end{equation}
weakly* in $L^\infty(0,T; L^2(0,L;\Rdd))$, where $A_{k1} \, E^{0}_{11} = \partial_{x_1}v_k E^{0}_{11}$. We note that $\rho_k = 0$ in case \eqref{eq:no-blowup} holds true. We now pass to the limit as \( h \to 0 \) in \eqref{phiek}, making use of point~ b3) of Theorem \ref{thm:compact} together with the convergence result \eqref{convergenceAhEhk1}, and obtain
\begin{equation}\label{eq:residual1}
    \!\!\!\lim_{h \to 0} \int^T_0 \!\!\!\int^L_0 \!\frac{E^{h,0}_{k1}}{h}\partial_{x_1}\phi \,\dx_1 \dt\! = \!\int^T_0\!\!\! \int^L_0 \!\left( \partial_t v_k \, \partial_t \phi - (\partial_{x_1}v_k E^{0}_{11} + \rho_k) \partial_{x_1}\phi +   f_k \phi \right) \,\dx_1\dt,
\end{equation}
for $k=2,\,3$ and any $\phi \in H^1_0(0,T; L^2(0,L))\cap L^2(0,T; H^2_0(0,L))$.

We now test equation \eqref{eq:weak-h2RE} with the functions \(\psi_k(x) := x_k \phi(t, x_1) e_1\) for \(k = 2,\,3\), where \(\phi \in H^1_0(0,T; L^2(0,L)) \cap L^2(0,T; H^2_0(0,L))\) is an arbitrary test function. Combined with \eqref{eq:bound-time-der} and the assumptions in \eqref{conditionsonF}, this yields
\begin{equation}\label{eq:ddhhdhdjk}
    \lim_{h\to 0}\int^T_0\int_{\Omega}\left(\frac{1}{h} R^hE^{h} : \phi e_1 \otimes e_k+x_k R^hE^{h}: \partial_{x_1}\phi e_1 \otimes e_1\right) \dx\dt=0
\end{equation}
Using \eqref{eq:Ah} we have $R^h E^h=h A^h E^h+E^h$ and so
\begin{equation*}
    \frac{1}{h}(R^h E^h)_{1k}= (A^h E^h)_{1k}+\frac{1}{h}(E^h)_{1k}=\sum^3_{j=1} A^h_{1j}E^h_{jk} +\frac{1}{h}(E^h)_{1k}.
\end{equation*}
Using point~iii) of Proposition~\ref{thm:Rh} together with \eqref{step2}, we obtain  
\[
(R^h E^h)_{11} \overset{\ast}{\rightharpoonup} E_{11} \quad \text{weakly* in } L^\infty(0,T; L^2(\Omega;\mathbb{R}^{2\times 2})),
\]
while from point d2) of Theorem \ref{thm:compact}, \eqref{step2} and \eqref{raprrsentofE} we deduce, in particular, that  
\[
\sum_{j=1}^3 A^h_{1j} E^{h,0}_{jk} \overset{\ast}{\rightharpoonup} 0 \quad \text{weakly* in } L^\infty(0,T; L^2(0,L;\mathbb{R}^{2\times 2})).
\]
Then, integrating equation~\eqref{eq:ddhhdhdjk} over the cross section \( S \), we obtain
\begin{equation}\label{eq:residual2}
    \lim_{h\to 0}\int^T_0\int^L_0\frac{1}{h} E^{h,0}_{1k} \phi \,\dx_1\dt=-\int^T_0\int^L_0 E^{k}_{11} \partial_{x_1}\phi \,\dx_1\dt.
\end{equation}
It is straightforward to verify using \eqref{eq_Ehsymm} that
\begin{equation}\label{eq:residual3}
    \lim_{h\to 0}\int^T_0\int^L_0\left(\frac{1}{h} E^{h,0}_{1k} - \frac{1}{h} E^{h,0}_{k1}\right)\partial_{x_1}\phi \,\dx_1\dt=0,
\end{equation}
and thus, taking into account \eqref{eq:residual1}, \eqref{eq:residual2}, and \eqref{eq:residual3}, we conclude that for every test function \(\phi \in H^1_0(0,T; L^2(0,L)) \cap L^2(0,T; H^2_0(0,L))\), the following holds:
\begin{equation*}
    \int^T_0 \int^L_0 \left( \partial_t v_k \, \partial_t \phi - { (\partial_{x_1}v_k E^{0}_{11} + \rho_k)} \partial_{x_1}\phi + E^{k}_{11} \partial^2_{x_1x_1}\phi +   f_k \phi \right) \,\dx_1\dt=0. 
\end{equation*}
With the help of the already established first equation of \eqref{eqlimitingeqFORTE1} this implies the second equation of \eqref{eqlimitingeqFORTE1} for \(k = 2\), and the third equation for \(k = 3\). Moreover, from these equations we also obtain
\begin{equation}\label{EqExtraRegvk}
    \partial_{tt}^2 v_k \in L^\infty(0,T; H^{-2}(0,L)),
\end{equation}
since it holds $\partial_{x_1x_1}^2 v_k E^0_{11} \in L^\infty(0,T;L^1(0,L)) \subset L^\infty(0,T;H^{-1}(0,L))$ and $\partial_{x_1} \rho_k \in L^\infty(0,T;H^{-1}(0,L))$ while $\partial_{x_1x_1}^2 E^k_{11} \in L^\infty(0,T;H^{-2}(0,L))$. 

\textit{\underline{{Step 6}}: Derivation of the last equation in \eqref{eqlimitingeqFORTE1}}. We test \eqref{eq:weak-h2RE} with a function $\psi(x):=x_2 \phi\left(t, x_1\right) e_3$, for some $\phi \in H^1_0(0,T; L^2(0,L))\cap L^2(0,T; H^1_0(0,L))$. Using \eqref{eq:bound-time-der} we deduce that
\begin{equation*}
    \lim_{h\to 0} \int^T_0 \int_\Omega \partial_t y^h \cdot \partial_t \psi \,\dx\dt=0,
\end{equation*}
while we have 
\begin{equation*}
    \lim_{h\to 0}\int^T_0 \int_\Omega    h f \cdot \psi  \,\dx\dt=0,
\end{equation*}
so we can pass to the limit in \eqref{eq:weak-h2RE} to obtain
\begin{align}
 0=&\lim_{h \to 0}\int^T_0 \int_\Omega R^h E^h : D_h\psi \,\dx\dt\nonumber\\
 =&\lim_{h \to 0}\int^T_0\int_{\Omega}\left(\frac{1}{h} R^hE^{h} : \phi e_3 \otimes e_2+x_2 R^hE^{h}: \partial_{x_1}\phi e_3 \otimes e_1\right) \dx\dt\nonumber\\
 =&\lim_{h \to 0}\int^T_0\int^L_{0}\left(\frac{1}{h} R^h E^{h,0} : \phi e_3 \otimes e_2+ R^h E^{h,2}: \partial_{x_1}\phi e_3 \otimes e_1\right) \dx_1\dt
\end{align}
By \eqref{eq:Ah} we have $R^h E^h=h A^h E^h+E^h$ and so 
\begin{equation}
    \frac{1}{h}(R^h E^h)_{32}= (A^h E^h)_{32}+\frac{1}{h}(E^h)_{32}.
\end{equation}
We use $R^h E^{h,2} \overset{\ast}{\rightharpoonup} E^2$ weakly* in $L^\infty(0,T; L^2(0,L;\Rdd))$ to get
\begin{equation}\label{eq_eq4-1}
    \lim_{h\to 0} \int^T_0\int^L_{0}{\left(\frac{1}{h} E^{h,0}_{32} + (A^h E^{h,0})_{32}\right)}\phi\, \dx_1\dt = - \int^T_0\int^L_{0}E^2_{31} \partial_{x_1} \phi\,\dx_1\dt.
\end{equation}
We now test \eqref{eq:weak-h2RE} with $\psi(x):=x_3 \phi\left(t, x_1\right) e_2$, where $\phi \in H^1_0(0,T; L^2(0,L))\cap L^2(0,T; H^1_0(0,L))$ is arbitrary. By proceeding as in the previous computations, we obtain
\begin{equation}\label{eq_eq4-2}
    \lim_{h\to 0} \int^T_0\int^L_{0}{\left(\frac{1}{h} E^{h,0}_{23} + (A^h E^{h,0})_{23}\right)}\phi\, \dx_1\dt = - \int^T_0\int^L_{0}E^{{3}}_{21} \partial_{x_1} \phi\,\dx_1\dt,
\end{equation}
Finally, using \eqref{eq_Ehsymm}, \eqref{eq_eq4-1}, \eqref{eq_eq4-2} and \eqref{eq:AhEh-to-B}, we find
\begin{equation}
    - \int^T_0\int^L_{0} (E^2_{31} - E^3_{21})\partial_{x_1} \phi\,\dx_1\dt 
    = \int^T_0\int^L_{0}(B_{32} - B_{23})\phi\, \dx_1\dt, 
\end{equation}
which, in view of \eqref{raprrsentofE}, implies the last equation of system \eqref{eqlimitingeqFORTE1} with $\sigma$ being as defined in \eqref{eq:rho-sigma-def}, which vanishes in case \eqref{eq:no-blowup} holds true.

\textit{\underline{{Step 7}}: Derivation of the initial conditions}. We start by observing that using the definition of \eqref{datiinizrescalin1-2} and the energy inequality \eqref{eqenergiadatiinizialiyh} we get that for $k=2,\,3$ the sequence $v^{S,h}_k$ is bounded in $L^2(0,L)$. Then, for $k=2,\,3$ there exists a function $v^S_k \in L^2(0,L)$ such that, up to subsequences,
\begin{equation}\label{convecompsvkjjj}
    v^{S,h}_k \rightharpoonup v^S_k \quad \textnormal{ weakly in } L^2(0,L).
\end{equation}
As for the other initial condition we first notice that the space $L^\infty(0,T;H^1(\Omega)) \cap W^{1,\infty}(0,T;L^2(\Omega))$ embeds into $C([0,T];H^1_{\textnormal{weak}}(\Omega))$ (see Lemma \ref{LemmaWeakCont}). In particular, $y^h_P(x) = y^h(0,x) = (x_1, hx)$ for $x \in \{0,L\}\times S$. Hence, also taking into account definition \eqref{datiinizrescalin1} and the energy inequality \eqref{eqenergiadatiinizialiyh} we can apply Theorem~\ref{thm:compact}\ to the static deformations $y^h_P$
to get that, up to subsequences,
\begin{equation}\label{convvkphcompact}
    v^{P,h}_k \to v^P_k \quad \textnormal{ strongly in } H^1(0,L),
\end{equation}
for some $v^P_k \in H^1(0,L)$ and $k=2,\,3$.

We have now to establish the initial conditions \eqref{datiiniz1}. We first notice that, by applying Lemma~\ref{LemmaWeakCont} with $X = H^2(0,L)$ and $Y = L^2(0,L)$, we obtain that $v_k \in C^0([0,T]; H^2_{\textnormal{weak}}(0,L))$. Since $H^1(0,T; L^2(0,L)) \subset C^0([0,T]; L^2(0,L))$, we have that condition b2) of Theorem \ref{thm:compact} implies
\begin{equation}\label{convt0vk}
    v^h_k(t=0, \cdot) \to v_k(t=0, \cdot) \quad\textnormal{ strongly in }  L^2(0,L)
\end{equation}
Moreover, condition \eqref{eq:iniz-cond-yh} implies $v^h_k(0,x_1)=v^{P,h}_k(x_1)$ for a.e. $x_1 \in (0,L)$. Using \eqref{convvkphcompact} and \eqref{convt0vk} we conclude that for $k=2,\,3$ $v_k(0,x_1)=v^P_k(x_1)$ for a.e. $x_1 \in (0,L)$. Moreover, recalling that $v_k \in C^0([0,T]; H^2_{\textnormal{weak}}(0,L))$, we get
\begin{equation*}
    v_k(t) \rightharpoonup v^P_k \quad \textnormal{weakly in } H^2(0,L) \,\, \textnormal{ as }\,\, t \to 0^+.
\end{equation*}

It remains to derive the initial condition on the time derivative of $v_k$, for $k=2,\,3$. We first notice that, by applying Lemma~\ref{LemmaWeakCont} with $X = H^{-2}(0,L)$ and $Y = L^2(0,L)$, together with \eqref{EqExtraRegvk}, we deduce that $\partial_t v_k \in C^0([0,T]; L^2_{\textnormal{weak}}(0,L))$. Let $\phi \in C^\infty_c((0,T)\times (0,L))$ arbitrary and let us consider \eqref{eq:weak-h2RE} with test functions $\psi_k(t,x):=x_k\partial_{x_1}\phi(t,x_1)e_1$, with $k=2,\,3$. We use the decomposition $R^h E^h=h A^h E^h+E^h$ to get
\begin{equation*}
    \int^T_0\!\!\int^L_0\!\!  \left(\partial_t \hat{y}^{h,k}_1\partial_t\partial_{x_1}\phi - (R^h E^{h,k})_{11} \partial^2_{x_1x_1}\phi - \left(\frac{E^{h,0}_{1k}}{h}+\sum^3_{i=1} A^h_{1i}E^{h,0}_{ik}\right)\partial_{x_1}\phi \right) \dx_1\dt\!=\!0,
\end{equation*}
where $\hat{y}^{h,k}:=\int_S\,x_ky^h \,\dx'$. We subtract the previous equation from \eqref{phiek}, considered with test function $\phi \in C^\infty_c((0,T)\times (0,L))$, and using \eqref{eq_Ehsymm} we get
\begin{equation*}
    \left|  \int^T_0 \int^L_0 \left( \partial_t v^h_k \partial_t \phi - \partial_t \hat{y}^{h,k}_1 \partial_t\partial_{x_1}\phi \right) \,\dx_1 \dt \right| \leq C \| \phi\|_{L^2(0,T;H^2_0(0,L))}.
\end{equation*}
Then, the sequence $\partial^2_{tt} v^h_k + \partial^2_{tt} \partial_{x_1}\hat{y}^{h,k}_1 $ is bounded in $L^2(0,T;H^{-2}(0,L))$, namely
\begin{equation}\label{boundL2Hmin3}
    \|\partial^2_{tt} v^h_k + \partial^2_{tt} \partial_{x_1}\hat{y}^{h,k}_1 \|_{L^2(0,T;H^{-2}(0,L))} \leq C.
\end{equation}
Moreover, using \eqref{eq:bound-time-der}, we have that
\begin{equation*}
    \partial_t\partial_{x_1}\hat{y}^{h,k}_1 \to 0 \quad \textnormal{strongly in } L^\infty(0,T;H^{-1}(0,L)),
\end{equation*}
and taking into account condition b3) of Theorem \ref{thm:compact}, we get
\begin{equation}\label{convLinftHmin1}
    \partial_{t} v^h_k + \partial_{t} \partial_{x_1}\hat{y}^{h,k}_1 \overset{\ast}{\rightharpoonup} \partial_{t} { v_k} \quad \textnormal{ weakly* in } L^\infty(0,T; H^{-1}(0,L)).
\end{equation}
Since $H^1(0,T;H^{-2}(0,L)) \cap L^\infty(0,T; H^{-1}(0,L))$ is compactly embedded into the space $C^0([0,T]; H^{-2}(0,L))$, we derive from \eqref{boundL2Hmin3} and \eqref{convLinftHmin1} that
\begin{equation}\label{ealmostdoneconv}
    \partial_{t} v^h_k(0,\cdot) + \partial_{t} \partial_{x_1}\hat{y}^{h,k}_1 (0,\cdot) \to \partial_{t} {v_k}(0,\cdot) \quad \textnormal{ strongly in }  H^{-2}(0,L).
\end{equation}
We deduce from \eqref{condizinizyhforte} and \eqref{eqenergiadatiinizialiyh} that $\partial_{t} \partial_{x_1}\hat{y}^{h,k}_1 (0,\cdot) \to 0$ strongly in $H^{-1}(0,L)$. Furthermore, applying once again \eqref{condizinizyhforte}, we get $\partial_{t} v^h_k(0,\cdot)= v^{S,h}_k(\cdot)$. Taking into account \eqref{convecompsvkjjj} and \eqref{ealmostdoneconv} we conclude that, for $k=2,\,3$,
\begin{equation*}
    \partial_t v_k(0,x_1) = v^S_k(x_1) \quad\textnormal{ for a.e. } x_1 \in (0,L).
\end{equation*}
Finally, recalling that $\partial_t v_k \in C^0([0,T]; L^2_{\textnormal{weak}}(0,L))$, we obtain, for $k=2,\,3$,
\begin{equation*}
    \partial_t v_k(t) \rightharpoonup v^S_k \quad \textnormal{weakly in } L^2(0,L) \,\, \textnormal{ as }\,\, t \to 0^+.
\end{equation*}
This concludes the proof.
\end{proof}

\begin{appendix}
\section{Auxiliary results}

We state here some well known auxiliary results for easy reference.

\begin{lemma}\label{LemmaWeakCont}
    Let $X$ and $Y$ be two Banach spaces, with continuous injection $X \hookrightarrow Y$, and let $X$ be reflexive. Then, 
    \[
    C^0([0,T]; Y_{\operatorname{weak}}) \cap L^\infty(0,T; X) = C^0([0,T]; X_{\operatorname{weak}}).
    \]
\end{lemma}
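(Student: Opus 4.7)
The inclusion $C^0([0,T]; X_{\operatorname{weak}}) \subseteq C^0([0,T]; Y_{\operatorname{weak}}) \cap L^\infty(0,T; X)$ is the easy direction: every $\phi \in Y^*$ restricts along the continuous injection $X \hookrightarrow Y$ to an element of $X^*$, so weak continuity in $X$ forces weak continuity in $Y$; and the uniform boundedness principle applied to the pointwise-bounded family $\{\langle \phi, v(t)\rangle : t \in [0,T]\}$ for $\phi \in X^*$ produces the $L^\infty(0,T;X)$ bound.

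For the nontrivial inclusion, fix $v \in C^0([0,T]; Y_{\operatorname{weak}}) \cap L^\infty(0,T; X)$ and set $M := \|v\|_{L^\infty(0,T;X)}$, so that $\|v(t)\|_X \leq M$ for $t$ outside a null set $N \subset [0,T]$. My plan is in two steps: first, upgrade the a.e.\ information to $v(t) \in X$ with $\|v(t)\|_X \leq M$ for \emph{every} $t \in [0,T]$; second, verify weak continuity into $X$.

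For the first step, fix $t_0 \in [0,T]$ and pick a sequence $t_n \to t_0$ with $t_n \in [0,T] \setminus N$. Since $X$ is reflexive and $\|v(t_n)\|_X \leq M$, Banach--Alaoglu gives a subsequence and $z \in X$ with $v(t_{n_k}) \rightharpoonup z$ weakly in $X$. The continuity of the injection $X \hookrightarrow Y$ propagates this to $v(t_{n_k}) \rightharpoonup z$ weakly in $Y$, while the hypothesis $v \in C^0([0,T]; Y_{\operatorname{weak}})$ yields $v(t_{n_k}) \rightharpoonup v(t_0)$ weakly in $Y$. By uniqueness of weak limits in $Y$ and injectivity of the embedding, $v(t_0) = z \in X$, and weak lower semicontinuity of the norm gives $\|v(t_0)\|_X \leq M$.

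For weak continuity, fix $\phi \in X^*$, $t_0 \in [0,T]$ and $t_n \to t_0$. By Step 1 the whole sequence $\{v(t_n)\}$ is bounded in $X$ by $M$, so given any subsequence, the argument above applied to a further subsequence produces a weak limit in $X$ that must coincide with $v(t_0)$. Hence $\langle \phi, v(t_{n_k})\rangle \to \langle \phi, v(t_0)\rangle$ along every such further subsequence, and the subsequence principle yields convergence of the full sequence. The main (and really only) obstacle is the first step: making sense of $v(t_0)$ as an element of $X$ at points $t_0 \in N$, which genuinely requires both the reflexivity of $X$ (to extract a weak cluster point) and the continuous injection into $Y$ (to identify it with $v(t_0)$).
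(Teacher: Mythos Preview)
Your argument is correct. The paper does not actually supply a proof of this lemma; it simply refers to \cite[Chapter~XVIII, \S5, Lemma~6]{DautrayLionsVol5}. Your self-contained argument follows the standard route found there: use reflexivity of $X$ to extract weak cluster points in $X$ along sequences where the $X$-bound is known, identify them via the weak-$Y$ continuity, and then run the subsequence principle. One cosmetic remark: in the easy direction, the uniform boundedness principle is being applied to the family $\{v(t):t\in[0,T]\}\subset X$ (weak boundedness implies norm boundedness), not literally to the scalar family you wrote; but the intent is clear and the conclusion correct.
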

For a proof see, e.g., \cite[Chapter XVIII, §5, Lemma 6]{DautrayLionsVol5}.

\begin{theorem}\label{thm:relatcompLp}
    Let $X \hookrightarrow B \hookrightarrow Y$ be Banach spaces with compact embedding $X \hookrightarrow B$. Let $T>0$ and let $\mathcal{F}$ be a bounded subset of $L^\infty((0, T ); X)$. Assume that for every $0 < t_1 < t_2 < T$, 
    \begin{equation}\label{eq:compact-cond}
        \sup_{f \in \mathcal{F}}\, \| T_s f - f\|_{L^1((t_1,t_2);Y)} \to 0, \quad \text{as } s \to 0,
    \end{equation}
    where $T_sf (t):= f(t+s)$, for a.e. $t \in (-s, T-s)$. Then, $\mathcal{F}$ is relatively compact in $L^p((0,T), B)$, for every $p \in [1, \infty)$.
\end{theorem}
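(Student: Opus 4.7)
The plan is to run the classical Aubin--Lions--Simon argument, combining a time-averaging approximation with Ehrling's interpolation inequality to convert the $L^1(Y)$ translation hypothesis (together with the $L^\infty(X)$ bound) into $L^p(B)$ compactness. The first tool I would invoke is Ehrling's lemma: since $X \hookrightarrow\hookrightarrow B$ and $B \hookrightarrow Y$, a standard compactness-contradiction argument gives, for every $\varepsilon>0$, a constant $C_\varepsilon$ such that $\|v\|_B \leq \varepsilon \|v\|_X + C_\varepsilon \|v\|_Y$ for all $v \in X$. A second, purely bookkeeping reduction is to compactness on compactly contained subintervals: writing $M:=\sup_{f\in\mathcal{F}}\|f\|_{L^\infty((0,T);X)}$, the continuous embedding $X\hookrightarrow B$ furnishes a uniform $L^\infty(B)$ bound, so that for $0<t_1<t_2<T$,
\[
\|f\|_{L^p((0,t_1)\cup(t_2,T);B)}\leq CM\bigl(t_1^{1/p}+(T-t_2)^{1/p}\bigr),
\]
which is arbitrarily small. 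Combined with the interpolation $\|g\|_{L^p(B)}\leq \|g\|_{L^1(B)}^{1/p}\|g\|_{L^\infty(B)}^{1-1/p}$, this reduces the task to proving, for every $0<t_1<t_2<T$, that $\mathcal{F}$ is relatively compact in $L^1((t_1,t_2);B)$.

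The heart of the argument is the time-averaged approximant. For $0<\eta<\min(t_1,T-t_2)$, partition $(t_1,t_2)$ into consecutive intervals $I_k^\eta$ of length $\eta$ and set
\[
f_\eta(t):=\frac{1}{\eta}\int_{I_k^\eta} f(s)\,\textnormal{d}s \quad \text{for } t\in I_k^\eta.
\]
A Fubini computation with the change of variables $r=s-t\in(-\eta,\eta)$, extending the inner $t$-integration to all of $I_k^\eta$ (valid since $\eta<\min(t_1,T-t_2)$ keeps all translates inside $(0,T)$), yields
\[
\|f-f_\eta\|_{L^1((t_1,t_2);Y)}\leq \frac{1}{\eta}\int_{-\eta}^{\eta}\|T_r f-f\|_{L^1((t_1,t_2);Y)}\,\textnormal{d}r\leq 2\,\omega_\mathcal{F}(\eta),
\]
with $\omega_\mathcal{F}(\eta):=\sup_{|r|\leq \eta,\,f\in\mathcal{F}}\|T_r f-f\|_{L^1((t_1,t_2);Y)}\to 0$ by hypothesis. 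Combining with Ehrling's inequality and the uniform bound $\|f_\eta\|_{L^\infty((t_1,t_2);X)}\leq M$,
\[
\sup_{f\in\mathcal{F}}\|f-f_\eta\|_{L^1((t_1,t_2);B)}\leq 2\varepsilon M(t_2-t_1)+2 C_\varepsilon\,\omega_\mathcal{F}(\eta),
\]
so first sending $\eta\to 0$ and then $\varepsilon\to 0$ shows that the family $\{f_\eta:f\in\mathcal{F}\}$ approximates $\mathcal{F}$ in $L^1((t_1,t_2);B)$ uniformly in $f$.

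For each fixed $\eta$, the set $\{f_\eta:f\in\mathcal{F}\}$ consists of functions that are piecewise constant on the finite partition $\{I_k^\eta\}_{k}$, and each piecewise value lies in a bounded, hence $B$-precompact, subset of $X$. A finite-dimensional diagonalization therefore yields relative compactness of $\{f_\eta\}$ in $L^\infty((t_1,t_2);B)$, \emph{a fortiori} in $L^1((t_1,t_2);B)$. Given any sequence $\{f^n\}\subset\mathcal{F}$, I would extract along $\eta_j\downarrow 0$ a diagonal subsequence for which $\{f^n_{\eta_j}\}_n$ converges in $L^1((t_1,t_2);B)$ for every $j$; a $3\varepsilon$-argument using the uniform approximation bound then shows $\{f^n\}$ is Cauchy in $L^1((t_1,t_2);B)$. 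The reductions above promote this to $L^p((0,T);B)$-convergence.

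The only genuinely delicate point is the bookkeeping in the Fubini estimate: the translation hypothesis is only formulated on compactly contained subintervals, so $\eta$ must be chosen smaller than $\min(t_1,T-t_2)$ to ensure that every translate $T_r f$ remains defined on all of $(t_1,t_2)$. Once this is arranged, the argument reduces to a routine interplay between Ehrling's inequality, the dyadic time-averaging, and the compactness of bounded subsets of $X$ inside $B$.
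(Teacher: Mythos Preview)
The paper does not supply its own proof of this theorem; it simply states ``We refer to \cite{Simon} for a proof and further details.'' Your proposal is a correct, self-contained rendition of precisely the Aubin--Lions--Simon argument found in that reference: Ehrling's interpolation inequality, reduction to compactly contained subintervals via the $L^\infty(X)$ tail bound and the $L^1$--$L^\infty$ interpolation, piecewise-constant time averaging, and a diagonal extraction. Only cosmetic points could be tightened---e.g.\ the partition of $(t_1,t_2)$ into intervals of length exactly $\eta$ need not close up (one handles the last piece separately or restricts to $\eta$ dividing $t_2-t_1$), and the passage from compactness on each $(t_1,t_2)$ to compactness on $(0,T)$ is most cleanly phrased via total boundedness rather than a second diagonalization---but none of this affects the validity of the argument.
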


We refer to \cite{Simon} for a proof and further details.

\begin{lemma}\label{lemmadifferenziab}
Let $ U $ a bounded domain in $ \mathbb{R}^d $, and $ f : \mathbb{R}^d \to \mathbb{R} $ such that $ f $ is differentiable at zero and $ |f(A)| \leq C|A| $ for all $ A \in \mathbb{R}^d $. If $ z_\delta \overset{\star}{\rightharpoonup} z $ weakly* in $ L^\infty(0,T; L^2(U; \mathbb{R}^d)) $ as $ \delta \to 0 $, then

\[
\frac{1}{\delta} f(\delta z_\delta) \overset{\star}{\rightharpoonup} Df(0)z \quad  \text{ weakly* in } L^\infty(0,T; L^2(U; \mathbb{R}^d)) \quad \text{as } \delta \to 0.
\]
\end{lemma}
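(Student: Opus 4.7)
The strategy is to decompose
\[
\frac{1}{\delta}f(\delta z_\delta) = Df(0) z_\delta + R_\delta, \qquad R_\delta := \frac{1}{\delta}\bigl(f(\delta z_\delta) - \delta\, Df(0)\,z_\delta\bigr),
\]
and show separately that $Df(0)z_\delta \overset{\ast}{\rightharpoonup} Df(0)z$ in $L^\infty(0,T;L^2)$ and that $R_\delta \overset{\ast}{\rightharpoonup} 0$ in the same space. From $|f(A)|\leq C|A|$ and $f(0)=0$ one also infers $|Df(0)|\leq C$, so that both $Df(0)z_\delta$ and $R_\delta$ inherit the uniform bound $|R_\delta|\leq (C+|Df(0)|)|z_\delta|$, and the sequence $\delta^{-1}f(\delta z_\delta)$ is bounded in $L^\infty(0,T;L^2)$. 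The first term poses no issue: weak* convergence of $z_\delta$ and linearity of $Df(0)$ give $Df(0)z_\delta \overset{\ast}{\rightharpoonup} Df(0)z$.

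The core task is to control $R_\delta$. Since $R_\delta$ is bounded in $L^\infty(0,T;L^2(U))$, it suffices to prove $\int_0^T\!\int_U R_\delta \phi \, dx\,dt \to 0$ for $\phi$ in a dense subset of $L^1(0,T;L^2(U))$, for which I would take $\phi\in C^\infty_c((0,T)\times U)$. Given $\varepsilon>0$, use differentiability of $f$ at the origin to pick $\eta>0$ with $|f(A)-Df(0)A|\leq \varepsilon |A|$ whenever $|A|\leq \eta$, and split the domain into
\[
E_\delta := \{(t,x):|\delta z_\delta(t,x)|\leq \eta\}, \qquad E_\delta^c := \{(t,x):|\delta z_\delta(t,x)|>\eta\}.
\]
On $E_\delta$ the pointwise bound $|R_\delta|\leq \varepsilon |z_\delta|$ yields
\[
\Bigl|\int_{E_\delta} R_\delta \,\phi \,dx\,dt\Bigr|\leq \varepsilon\,\|z_\delta\|_{L^\infty L^2}\,\|\phi\|_{L^1 L^2}\leq \varepsilon\, C_\phi .
\]
On $E_\delta^c$, Chebyshev's inequality gives, for a.e.\ $t$, $\mathcal{L}^d\bigl(\{x:|z_\delta(t,x)|>\eta/\delta\}\bigr)\leq (\delta/\eta)^2\|z_\delta(t)\|_{L^2}^2 \to 0$ uniformly in $t$; combined with $|R_\delta|\leq C'|z_\delta|$ and the boundedness of $\phi$, dominated convergence (applied slice-wise and then in time) forces $\int_{E_\delta^c}R_\delta\phi \to 0$. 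Letting $\varepsilon\to 0$ concludes the identification of the weak* limit.

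The main obstacle is precisely this remainder estimate on $E_\delta^c$: the hypothesis provides only weak* convergence of $z_\delta$, so no pointwise control is available, and the "good" linear rate $\varepsilon|z_\delta|$ only applies where $\delta z_\delta$ is small. The right mechanism is the Chebyshev shrinkage of $E_\delta^c$ as $\delta\to 0$, paired with the boundedness of the test function; this is why restricting to $\phi\in C^\infty_c$ and then using density is important. Everything else is essentially bookkeeping given the global linear bound $|f(A)|\leq C|A|$.
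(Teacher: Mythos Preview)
Your proof is correct and follows the standard route. The paper itself does not give an independent proof: it cites \cite[Proposition~2.3]{Muller-Pakzad} for the $L^p$ case and remarks that the argument carries over with obvious modifications, and your decomposition $\delta^{-1}f(\delta z_\delta)=Df(0)z_\delta+R_\delta$ together with the small/large splitting via Chebyshev is precisely that argument. One cosmetic point: on $E_\delta^c$ you do not actually need dominated convergence---Cauchy--Schwarz on each time slice gives $\int_{E_\delta^c(t)}|z_\delta|\,dx\le \|z_\delta(t)\|_{L^2}\,|E_\delta^c(t)|^{1/2}\le (\delta/\eta)\|z_\delta(t)\|_{L^2}^2$, which vanishes uniformly in $t$ after integrating against a bounded $\phi$.
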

In \cite[Proposition 2.3.]{Muller-Pakzad}, the lemma is proven in the case of weak convergence in $L^p(U)$. The proof follows similarly in our case, with obvious modifications.
\end{appendix}

\vspace{1 cm}

\noindent \textsc{Acknowledgements.}
The second author gratefully acknowledges the support of the Deutsche Forschungsgemeinschaft (DFG, German Research Foundation) for Project 441138507.
\vspace{1cm}

{\frenchspacing
\begin{thebibliography}{99}

\bibitem{Abels-Ames-timeex} H. Abels, T. Ameismeier: \textit{Large times existence for thin vibrating rods}, Asymptotic Analysis \textbf{131} (3-4), 471-512, (2022).

\bibitem{Abels-Ames-timeconv} H. Abels, T. Ameismeier: \textit{Convergence of thin vibrating rods to a linear beam equation}. Z. Angew. Math. Phys. \textbf{73}, 166 (2022).

\bibitem{A-M-M-timeex} H.~Abels, M.~G.~Mora, S.~Müller: \textit{Large Time Existence for Thin Vibrating Plates}, Comm.\ Partial Differential Equations \textbf{36}, 2062-2102, (2011).

\bibitem{A-M-M} H.~Abels, M.~G.~Mora, S.~Müller: \textit{The time-dependent von Kármán plate equation as a limit of 3d nonlinear elasticity}, Calc.\ Var.\  Partial Differential Equations \textbf{41}, 241–259 (2011).

\bibitem{AcerbiButtazzoPercivale} E.~Acerbi, G.~Buttazzo, D.~Percivale: \textit{A variational definition for the strain energy of an elastic string}, J.\ Elasticity \textbf{25},  137-148 (1991).

\bibitem{Ameismeier} T.~Ameismeier: \textit{Thin Vibrating Rods: $\Gamma$-Convergence, Large Time Existence and First Order Asymptotics}, Ph.D.\ Thesis, University of Regensburg, https://doi.org/10.5283/epub.46123, (2021).

\bibitem{Antman:95} S.~S. Antman: Nonlinear Problems of Elasticity, Springer-Verlag, New York, 1995.

\bibitem{Ball} J.~M.~Ball: \textit{Some Open Problems in Elasticity}, In: P.~Newton, P.~Holmes, A.~Weinstein (eds): Geometry, Mechanics, and Dynamics. Springer, New York, 2002.

\bibitem{BukalPawelczykVelcic:17}
M.~Bukal, M.~Pawelczyk, I.~Velčić: 
\textit{Derivation of homogenized Euler–Lagrange equations for von Kármán rods},
J.\ Diff.\ Equ.\ \textbf{262}, 5565–5605, (2017).

\bibitem{DautrayLionsVol5} R.~Dautray, J.-L.~Lions: \textit{Mathematical Analysis and Numerical Methods for Science and Technology: Volume 5, Evolution Problems I}, Springer-Verlag Berlin Heidelberg GmbH, (1999).

\bibitem{DavoliScardia:12} 
E.~Davoli, M.~G.~Mora: 
\textit{Convergence of equilibria of thin elastic rods under physical growth conditions for the energy density}, 
Proc.\ Roy.\ Soc.\ Edinburgh Sect.\ A \textbf{142}, 501–524 (2012).

\bibitem{Frieseke-James-Muller} G.~Friesecke,  R.~D.~James, S.~Müller: { \it A theorem on geometric rigidity and the derivation of nonlinear plate theory from three dimensional elasticity}, Comm.\ Pure Appl.\ Math.\ {\bf 55}, 1461–1506 (2002).

\bibitem{FriesekeJamesMuller:06} G.~Friesecke,  R.~D.~James, S.~Müller: \textit{A hierarchy of plate models derived from nonlinear elasticity by Gamma-convergence}, Arch.\ Ration.\ Mech.\ Anal.\ \textbf{180}, 183–236 (2006).

\bibitem{Lee} J.~M.~Lee: Introduction to smooth manifolds, Second.\ ed, Graduate Texts in Mathematics Vol.\ 218, Springer, New York 2013.

\bibitem{MoraMueller:03}
M.~G.~Mora, S~Müller: 
\textit{Derivation of the nonlinear bending-torsion theory for inextensible rods by $\Gamma$-convergence}, 
Calc.\ Var.\ Partial Differential Equations \textbf{18} 287–305 (2003).

\bibitem{MM-crit} M.~G.~Mora, S.~Müller: 
{\it Convergence of equilibria of three-dimensional thin elastic beams}, 
Proc.\ Roy.\ Soc.\ Edinburgh Sect.\ A {\bf 138}, 873–896 (2008).

\bibitem{MM-Gamma-VK} 
M.~G.~Mora, S.~Müller: 
{\it A nonlinear model for inextensible rods as a low energy $\Gamma$-limit of three-dimensional nonlinear elasticity}, 
Ann.\ Inst.\ Henri Poincaré, Anal.\ Non Linéaire {\bf 21}, 271-293 (2004).

\bibitem{M-M-S} 
M.~G.~Mora, S.~Müller, M.~G.~Schultz: 
{\it Convergence of equilibria of planar thin elastic beams}, Indiana Univ.\ Math.\ J.\ {\bf 56}, 2413–2438 (2007).

\bibitem{Muller-Pakzad} 
S.~Müller, M.~R.~Pakzad: 
\textit{Convergence of Equilibria of Thin Elastic Plates - The Von Kármán Case}, 
Comm.\ Partial Differential Equations \textbf{33}, 1018–1032 (2008).

\bibitem{QinYao:21} 
Y.~Qin, P.-F.~Yao: 
\textit{The time-dependent von Kármán shell equation as a Limit of
three-dimensional nonlinear elasticity}, 
J.\ Syst.\ Sci.\ Complex.\ \textbf{34}, 465–482 (2021). 

\bibitem{SchmidtZeman:23}
B.~Schmidt, J.~Zeman: 
\textit{A bending-torsion theory for thin and ultrathin rods as a $\Gamma$-limit of atomistic models}, 
Multiscale Model.\ Simul.\ \textbf{21}, 1717–1745 (2023).

\bibitem{Simon} J.~Simon: 
{\it Compact sets in the space $L^p(0, T ; B)$}, 
Ann.\ Mat.\ Pura Appl.\ {\bf 146}, 65–96, (1987).

\vspace{0,5 cm}

\end {thebibliography}
}

\end{document}